\documentclass[a4paper,12pt]{amsart}

\usepackage[margin=1in]{geometry}

\usepackage{amsmath, amsthm, amssymb}
\usepackage{cite}
\usepackage[dvipdfmx]{graphicx}
\usepackage{bm}

\theoremstyle{definition}
\newtheorem{theorem}{Theorem}[section]
\newtheorem{lemma}[theorem]{Lemma}

\theoremstyle{definition}
\newtheorem{definition}[theorem]{Definition}

\newtheorem{pclaim}[theorem]{Claim}
\newtheorem*{ac}{Acknowledgments} 

\theoremstyle{remark}
\newtheorem{remark}[theorem]{Remark}

\newenvironment{rmenum}{
\begin{enumerate}

}
{\end{enumerate}}

\newenvironment{engenum}{
\begin{enumerate}

}
{\end{enumerate}}

\newcommand{\parNei}[2]{N_{#1}(#2)}

\newcommand{\tcomp}[2]{\mathcal{G}(#1, #2)}

\newcommand{\distgt}[4]{\lambda(#3, #4; #1, #2)}
\newcommand{\distgtf}[5]{\lambda(#4, #5; #3; #1, #2)}
\newcommand{\parcut}[2]{\delta_{#1}(#2)}

\newcommand{\conn}[1]{\mathcal{C}(#1)}
\newcommand{\conntodd}[2]{\mathcal{C}_\mathrm{odd}(#1; #2)}
\newcommand{\connteven}[2]{\mathcal{C}_\mathrm{even}(#1; #2)}
\newcommand{\layr}[2]{U_{ < #1}(#2)}
\newcommand{\layler}[2]{U_{ \le #1}(#2)}
\newcommand{\levelr}[2]{U_{#1}(#2)}
\newcommand{\init}[1]{K_{#1}}
\newcommand{\agtr}[3]{A_{(#1, #2)}(#3)}
\newcommand{\dgtr}[3]{D_{(#1, #2)}(#3)}
\newcommand{\cgtr}[3]{C_{(#1, #2)}(#3)}
\newcommand{\ar}[1]{A(#1)}
\newcommand{\dr}[1]{D(#1)}
\newcommand{\ccr}[1]{C(#1)}

\newcommand{\skgtx}[3]{(#1, #2)\langle #3 \rangle}

\newcommand{\extend}[5]{(#1, #2; #3) \oslash (#4, #5)}

\newcommand{\skcomb}[1]{\mathcal{F}(#1)} 
\newcommand{\primal}{\mathcal{P}}

\title[Bipartite Graft III]{Bipartite Graft III: General Case}
\author{Nanao Kita}
\address{Tokyo University of Science 2641 Yamazaki, Noda, Chiba, Japan 278-0022}
\email{kita@rs.tus.ac.jp}

\date{\today}

\begin{document}

\begin{abstract} 
This paper is a sequel of our previous paper (N. Kita: Bipartite graft {II}: Cathedral decomposition for combs. arXiv preprint arXiv:2101.06678, 2021).  
In our previous paper, 
a graft analogue of the Dulmage-Mendelsohn decomposition has been introduced for comb bipartite grafts.  
In this paper, 
we prove how this result can be extended for general bipartite grafts. 
\end{abstract}

\maketitle

\section{Definition} 

We use the notations and definitions used in Kita~\cite{kita2021bipartite}. 
In  the following, we present those that are newly introduced in this paper. 

\begin{definition} 
Let $G$ be a graph. 
We denote the set of connected component of $G$ by $\conn{G}$.  
Let $X_1, \ldots, X_k$, where $k \ge 1$,  be disjoint sets of vertices in $G$. We denote $G/X_1/\cdots /X_k$ by $G/\{ X_1, \ldots, X_k\}$. 
The vertex of $G/\{X_1, \ldots, X_k\}$ that corresponds to $X_i$ is denoted by $[X_i]$. 
\end{definition}

\begin{definition} 
Let $G$ be a graph, and let $T$ be a set of vertices from a supergraph of $G$. 
We denote by $\conntodd{G}{T}$ the set $\{ C\in \conn{G}: |V(C)\cap T| \mbox{ is odd }\}$ 
and by $\connteven{G}{T}$ the set $\{ C \in \conn{G}: |V(C)\cap T| \mbox{ is even }\}$. 
\end{definition}

\begin{definition} 
Let $(G, T)$ be a  graft, and let $r\in V(G)$. Let $F$ be a minimum join. 
We denote the set $\{ x\in V(G): \distgtf{G}{T}{F}{r}{x} = 0 \}$ by $\levelr{0}{r}$  
and the set $\{ x\in V(G): \distgtf{G}{T}{F}{r}{x} < 0 \}$ by $\layr{0}{r}$.  
We also denote the set $\levelr{0}{r} \cup \layr{0}{r}$ by $\layler{0}{r}$. 
\end{definition}

\section{Combic Sets} 

In this section, we define $F$-combic sets in grafts, where $F$ is a minimum join, 
and show a fundamental property to be used in later sections.

\begin{definition} 
Let $(G, T)$ be a graft, and let $F$ be a minimum join. 
We say that a set $X\subseteq V(G)$ is {\em $F$-combic} if it satisfies the following properties. 
\begin{rmenum} 
\item $E_G[X] \cap F = \emptyset$. 
\item For every $C \in \conntodd{ G - X}{T}$,  $ |\parcut{G}{C}\cap F | = 1$. 
\item For every $C \in \connteven{ G - X}{T}$,  $\parcut{G}{C}\cap F   = \emptyset$.  
\end{rmenum} 
If $X \subseteq V(G)$ is $F$-combic,  
let $H := G -  E_G[X]  - \bigcup \{ V(C): C \in \connteven{G-X}{T} \}$ and $S := T \cap V(H)$; 
 we call the bipartite graft $(H, S)/ \conntodd{G-X}{T}$ 
the {\em skeleton} of $X$, and denote this graft by $\skgtx{G}{T}{X}$.  
\end{definition} 

Note that $\skgtx{G}{T}{X}$ is a bipartite graft with color classes $X$ and $\{ [C] : C \in \conntodd{G-X}{T} \}$, 
and $F \cap E_G[X, \bigcup \{ V(C): C \in \conntodd{G- X}{T} \}]$ forms a join of $\skgtx{G}{T}{X}$.

\begin{lemma}  \label{lem:combic2min} 
Let $(G, T)$ be a graft, and let $F$ be a minimum join. 
If $X\subseteq V(G)$ is $F$-combic, then the following hold. 
\begin{rmenum} 
\item \label{item:combic2min:oc} 
Let $C \in \conntodd{ G - X}{T}$, 
let $e_C$ be the edge in $\parcut{G}{C}\cap F$, and let $r_C \in V(C)$ be the end of $e_C$.    
Then, $F\cap E(C)$ is a minimum join of the graft $(C, (T\cap V(C)) \Delta \{r_C\})$.   
\item  \label{item:combic2min:ec} 
For every $C \in \connteven{ G - X}{T}$,  $F\cap E(C)$ is a minimum join of the graft $(C, T\cap V(C))$.   
\item \label{item:combic2min:sk} 
$F \cap \bigcup \{ \parcut{G}{C} : C \in \conntodd{G-X}{T}  \}$ is a minimum join of $\skgtx{G}{T}{X}$. 
\end{rmenum} 
\end{lemma}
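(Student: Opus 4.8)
The plan is to establish each assertion in two stages: first that the indicated edge set is a join of the relevant graft (i.e.\ the set of its odd-degree vertices coincides with the $T$-set of that graft), and then that this join is of minimum size. The first stage is pure parity bookkeeping, while for~\ref{item:combic2min:oc} and~\ref{item:combic2min:ec} the second stage reduces to an exchange argument against the minimality of $F$.

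For the first stage, recall that the edges of $G$ meeting a component $C$ of $G-X$ split into $E(C)$ and $\parcut{G}{C}$, and that $E_G[X]\cap F=\emptyset$ by $F$-combicness. If $C\in\connteven{G-X}{T}$, then $\parcut{G}{C}\cap F=\emptyset$, so $\deg_F(v)=\deg_{F\cap E(C)}(v)$ for each $v\in V(C)$; hence the odd-degree vertices of $F\cap E(C)$ are exactly $T\cap V(C)$, which is~\ref{item:combic2min:ec}. If $C\in\conntodd{G-X}{T}$, the unique cut edge $e_C\in\parcut{G}{C}\cap F$ is incident in $V(C)$ only to $r_C$ and lies outside $E(C)$, so it adds $1$ to $\deg_F(r_C)$ and nothing to $\deg_F$ at the other vertices of $C$; this is precisely what replacing $T\cap V(C)$ by $(T\cap V(C))\Delta\{r_C\}$ compensates for, giving~\ref{item:combic2min:oc}. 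For~\ref{item:combic2min:sk}, the set in question equals $F\cap E_G[X,\bigcup\{V(C):C\in\conntodd{G-X}{T}\}]$, which the remark following the definition already records as a join of $\skgtx{G}{T}{X}$.

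For minimality in~\ref{item:combic2min:oc} and~\ref{item:combic2min:ec} I would argue by contradiction. If $F\cap E(C)$ is not a minimum join of its graft, choose a strictly smaller join $F_C'$ of the same graft and set $F^\ast:=(F\setminus E(C))\cup F_C'$. Since $F$ and $F^\ast$ differ only among edges of $E(C)$, degrees at vertices outside $V(C)$ are unchanged, and at a vertex of $V(C)$ the odd-degree set of $F^\ast$ is obtained from that of $F_C'$ by symmetric difference with $\{r_C\}$ (if $C$ is odd) or with $\emptyset$ (if $C$ is even); in either case it works out to $T\cap V(C)$, since the two symmetric differences with $\{r_C\}$ cancel. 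Thus $F^\ast$ is a join of $(G,T)$ with $|F^\ast|<|F|$, contradicting the minimality of $F$. For~\ref{item:combic2min:sk} a short counting argument is cleaner: $\skgtx{G}{T}{X}$ is bipartite with color classes $X$ and $\{[C]:C\in\conntodd{G-X}{T}\}$, and every $[C]$ lies in its $T$-set, so any join $J$ of $\skgtx{G}{T}{X}$ satisfies $\deg_J([C])\ge 1$ for each such $C$; since every edge of $\skgtx{G}{T}{X}$ is incident to exactly one vertex of the form $[C]$, summing gives $|J|\ge|\conntodd{G-X}{T}|$, a value attained by $F\cap\bigcup\{\parcut{G}{C}:C\in\conntodd{G-X}{T}\}=\{e_C:C\in\conntodd{G-X}{T}\}$.

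I do not anticipate a serious obstacle. The points demanding care are the parity bookkeeping---confirming that $e_C$ meets $V(C)$ only at $r_C$ and is not an edge of $E(C)$, and that the exchange step genuinely reproduces the odd-degree set $T$ at $r_C$---together with the observation, underlying everything, that $F$-combicness forces $F$ to decompose as the disjoint union of its restrictions to the components of $G-X$ and to the skeleton, so that $|F|$ is the sum of the sizes of these pieces and each piece is therefore forced to be optimal.
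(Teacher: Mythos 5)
Your proof is correct, but it diverges from the paper's argument at both stages of the minimality reasoning. For \ref{item:combic2min:oc} and \ref{item:combic2min:ec} the paper does not exchange joins: it observes that $F\cap E(C)$ is a join of the component graft and then invokes Lemma~\ref{lem:minimumjoin} --- if $F\cap E(C)$ were not minimum, $C$ would contain a circuit of negative $F$-weight, which would also be a negative circuit of $(G,T)$, contradicting the minimality of $F$. Your exchange argument (splice a strictly smaller join $F_C'$ into $F$ to get $F^\ast$) is essentially the same fact unpacked, and your parity verification that $F^\ast$ is again a join of $(G,T)$ is exactly the bookkeeping the paper compresses into the word ``obviously''; either route is fine. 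For \ref{item:combic2min:sk} the difference is more substantial: the paper again argues via Lemma~\ref{lem:minimumjoin}, decomposing an arbitrary circuit $Q$ of the skeleton across the cuts $\parcut{G}{C}$ and noting $w_F(Q.\parcut{G}{C})\ge 0$ because each such piece has evenly many edges of which at most one lies in $F$. Your counting argument --- each $[C]$ lies in the $T$-set of the skeleton, so every join meets it in at least one edge; every edge of the skeleton meets exactly one $[C]$; and $F'$ attains the resulting lower bound $|\conntodd{G-X}{T}|$ --- is more elementary (no appeal to the circuit characterization) and in addition computes the minimum join size of the skeleton exactly, which the paper's argument does not. Both are valid; the one point worth making explicit in your version is that $\parcut{G}{C}\subseteq E_G[V(C),X]$ because distinct components of $G-X$ are non-adjacent, which is what guarantees both that the cuts $\parcut{G}{C}$ are pairwise disjoint (so $|F'|=|\conntodd{G-X}{T}|$) and that every edge of the skeleton is incident to exactly one $[C]$.
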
 
\begin{proof} %lem:combic2min
For proving \ref{item:combic2min:oc}, let $C \in \conntodd{ G - X}{T}$. 
Obviously, $F\cap E(C)$ is a join of $(C, (T\cap V(C)) \Delta \{r_C\})$.   
If $F\cap E(C)$ is not minimum, then Lemma~\ref{lem:minimumjoin} implies that $C$ has a circuit with a negative $F$-weight; 
however, this implies that $F$ is not a minimum join of $(G, T)$, contradicting the assumption. 
Hence, \ref{item:combic2min:oc} is proved. 
The claim \ref{item:combic2min:ec} can also be proved in a similar way. 

Let $F' := F \cap \bigcup \{ \parcut{G}{C} : C \in \conntodd{G-X}{T}  \}$.  
For proving \ref{item:combic2min:sk}, first note that $F'$ is obviously a join of $\skgtx{G}{T}{X}$. 
Let $Q$ be an arbitrary circuit of $\skgtx{G}{T}{X}$. 
Note $E(Q) \subseteq \bigcup \{ \parcut{G}{C} : C \in \conntodd{G-X}{T} \}$.  
For each $C \in \conntodd{G-X}{T}$, we have $w_F( Q. \parcut{G}{C}) \ge 0$. 
Hence, we have $w_F(Q) \ge 0$. 
That is, $\skgtx{G}{T}{X}$ has no circuit with negative $F'$-weight. 
Therefore, Lemma~\ref{lem:minimumjoin} implies that $F'$ is a minimum join. 
This completes the proof of the lemma. 
\end{proof}

\section{Primal Grafts}  

We introduce the concept of primal grafts and its fundamental property to be used in later sections.

\begin{definition} 
Let $(G, T)$ be a graft, and let $r\in V(G)$. Let $F$ be a minimum join of $(G, T)$. 
We say that $(G, T)$ is primal with respect to $x$ or $x$-primal if  $\distgtf{G}{T}{F}{r}{x} \le 0$ for every $x\in V(G)$. 
\end{definition}

\begin{lemma}[Seb\"o~\cite{DBLP:journals/jct/Sebo90}] \label{lem:sebo} 
Let $(G, T)$ be a graft, and let $F$ be a minimum join of $(G, T)$. 
Let $x, y \in V(G)$ be two distinct vertices, and let $P$ be an $F$-shortest path between $x$ and $y$. 
Then, $F\Delta E(P)$ is a minimum join of graft $(G, T\Delta \{x, y\})$.
Additionally, for every $z\in V(G)$, 
 $\distgtf{G}{T\Delta \{x, y\}}{F\Delta E(P)}{y}{z} = \distgtf{G}{T}{F}{x}{z} - w_F(P)$.  
\end{lemma}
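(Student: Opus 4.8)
The plan is to treat the two assertions in turn, using throughout the circuit criterion for minimum joins (Lemma~\ref{lem:minimumjoin}): a join $J$ of a graft is minimum if and only if $w_J(Q)\ge 0$ for every circuit $Q$. It is convenient to extend $w_F$ from paths and circuits to arbitrary edge sets by $w_F(S):=|S\setminus F|-|S\cap F|$, so that $w_F$ is additive over disjoint unions, satisfies $w_F(A\Delta B)=w_F(A)+w_F(B)-2w_F(A\cap B)$, and $|F\Delta S|=|F|+w_F(S)$ for every edge set $S$. That $F\Delta E(P)$ is a join of $(G,T\Delta\{x,y\})$ is immediate by parity, the set of odd-degree vertices of $F\Delta E(P)$ being the symmetric difference of that of $F$ (namely $T$) with that of $E(P)$ (namely $\{x,y\}$). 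For minimality, let $F'$ be any join of $(G,T\Delta\{x,y\})$; then $F\Delta F'$ has odd-degree vertex set $T\Delta(T\Delta\{x,y\})=\{x,y\}$, so it partitions into the edge set of a simple $x$--$y$ path $R$ together with the edge sets of pairwise edge-disjoint circuits $Q_1,\dots,Q_k$. Since $F$ is a minimum join we have $w_F(Q_i)\ge 0$, and since $P$ is $F$-shortest we have $w_F(R)\ge w_F(P)$; hence $|F'|=|F|+w_F(F\Delta F')=|F|+w_F(R)+\sum_i w_F(Q_i)\ge|F|+w_F(P)=|F\Delta E(P)|$, so $F\Delta E(P)$ is minimum.

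For the distance formula write $F'':=F\Delta E(P)$, a minimum join of $(G,T\Delta\{x,y\})$ by the above, and record the elementary identity
\[
w_{F''}(S)=w_F(S)-2\,w_F(S\cap E(P))\qquad\text{for every edge set }S,
\]
which follows edge by edge, membership in $F''$ and in $F$ differing exactly on $E(P)$. I would first prove ``$\le$''. Fix $z\in V(G)$ and let $P_z$ be an $F$-shortest $x$--$z$ path, so $w_F(P_z)=\distgtf{G}{T}{F}{x}{z}$. The set $E(P_z)\Delta E(P)$ has odd-degree vertex set $\{y,z\}$ (with the obvious reading when $z\in\{x,y\}$), so it decomposes into a simple $y$--$z$ path $R'$ and edge-disjoint circuits; as $F''$ is minimum each such circuit has nonnegative $w_{F''}$-weight, whence $w_{F''}(R')\le w_{F''}(E(P_z)\Delta E(P))$. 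Expanding the right-hand side by the displayed identity and the symmetric-difference rule, all cross terms cancel and one is left with $w_{F''}(E(P_z)\Delta E(P))=w_F(P_z)-w_F(P)$; therefore $\distgtf{G}{T\Delta\{x,y\}}{F''}{y}{z}\le w_{F''}(R')\le w_F(P_z)-w_F(P)=\distgtf{G}{T}{F}{x}{z}-w_F(P)$.

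The reverse inequality follows by symmetry once one observes that $P$ is also an $F''$-shortest $x$--$y$ path in $(G,T\Delta\{x,y\})$: for any $x$--$y$ path $\Pi$ the set $F''\Delta E(\Pi)$ is a join of $(G,T)$, so $|F''|+w_{F''}(\Pi)=|F''\Delta E(\Pi)|\ge|F|$, giving $w_{F''}(\Pi)\ge|F|-|F''|=-w_F(P)=w_{F''}(P)$, where the last equality is the displayed identity at $S=E(P)$. Since $F''\Delta E(P)=F$ and $(T\Delta\{x,y\})\Delta\{x,y\}=T$, applying the already-proved inequality ``$\le$'' to the graft $(G,T\Delta\{x,y\})$, its minimum join $F''$, the path $P$ read from $y$ to $x$, and the target $z$ yields $\distgtf{G}{T}{F}{x}{z}\le\distgtf{G}{T\Delta\{x,y\}}{F''}{y}{z}-w_{F''}(P)=\distgtf{G}{T\Delta\{x,y\}}{F''}{y}{z}+w_F(P)$, which rearranges to ``$\ge$''; combining the two inequalities gives the claimed equality.

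The main obstacle I anticipate is that graft distances do not arise from a global potential: unlike ordinary shortest-path distances, $\lambda(\cdot,\cdot;F;G,T)$ need not satisfy a per-edge triangle inequality, so there is no one-line reduced-cost proof, and each inequality must instead be extracted from the decomposition of an even (or ``exactly two odd vertices'') edge set into a path plus circuits, combined with the no-negative-circuit criterion of Lemma~\ref{lem:minimumjoin}. Beyond this, the only care needed is routine: choosing the extracted paths $R,R'$ simple so that $w_F(R)\ge\distgtf{G}{T}{F}{x}{y}$ and likewise for $R'$, and checking the degenerate cases $z\in\{x,y\}$, where the asserted identity is verified directly.
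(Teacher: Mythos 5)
The paper does not prove this lemma at all: it is quoted verbatim from Seb\"o's paper \cite{DBLP:journals/jct/Sebo90} and used as a black box, so there is no in-paper argument to compare against. Your proof is, however, correct and is essentially the standard $T$-join argument one would reconstruct from Seb\"o's work. Both halves are sound: the minimality of $F\Delta E(P)$ via the decomposition of $F\Delta F'$ into an $x$--$y$ path plus circuits together with the identity $|F\Delta S|=|F|+w_F(S)$ and the no-negative-circuit criterion of Lemma~\ref{lem:minimumjoin}; and the distance formula via the two inequalities, where the computation $w_{F''}(E(P_z)\Delta E(P))=w_F(P_z)-w_F(P)$ checks out (the cross terms $w_F(E(P_z)\cap E(P))$ do cancel), and the reversal trick --- showing $P$ is $F''$-shortest so that the already-proved inequality can be applied to $(G,T\Delta\{x,y\})$, $F''$, and $P$ read from $y$, using $F''\Delta E(P)=F$ and $w_{F''}(P)=-w_F(P)$ --- is clean and avoids a separate lower-bound argument. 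Your closing remarks on the degenerate cases $z\in\{x,y\}$ and on the need to extract \emph{simple} paths from the symmetric differences address the only places where care is genuinely required; nothing is missing.
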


Lemma~\ref{lem:sebo} implies the following for primal grafts.

\begin{lemma} \label{lem:tower2primal} 
Let $(G, T)$ be a graft that is primal with respect to $r\in V(G)$, and let $F$ be a minimum join of $(G, T)$. 
Let $r' \in \agtr{G}{T}{r} \setminus \{r\}$, and let $P$ be an $F$-shortest path between $r$ and $r'$. 
Then, $F \Delta E(P)$ is a minimum join of $(G, T\Delta \{r, r'\})$. 
Additionally, $\distgtf{G}{T}{F}{r}{x} = \distgtf{G}{T \Delta \{r, r'\}}{F\Delta E(P)}{r'}{x}$ for every $x\in V(G)$. 
Accordingly, the graft $(G, T \Delta \{r, r'\})$ is primal with respect to $r' \in V(G)$ 
for which $\agtr{G}{T\Delta \{r, r'\}}{r'} = \agtr{G}{T}{r}$ 
and $\dgtr{G}{T\Delta \{r, r'\}}{r'} = \dgtr{G}{T}{r}$ hold. 
\end{lemma}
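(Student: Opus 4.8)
The plan is to derive the entire statement from Seb\"o's lemma (Lemma~\ref{lem:sebo}) applied to the pair $r,r'$; the only arithmetic input needed is that the $F$-weight of $P$ vanishes, and this is precisely where the hypothesis $r'\in\agtr{G}{T}{r}$ enters.

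First I would observe that $w_F(P)=0$. Since $P$ is an $F$-shortest path between $r$ and $r'$, we have $w_F(P)=\distgtf{G}{T}{F}{r}{r'}$; and since $r'$ lies in $\agtr{G}{T}{r}$, which by definition is the set of vertices $x$ with $\distgtf{G}{T}{F}{r}{x}=0$, this weight is $0$. Now apply Lemma~\ref{lem:sebo} with $x:=r$ and $y:=r'$: it yields immediately that $F\Delta E(P)$ is a minimum join of $(G,T\Delta\{r,r'\})$ --- the first assertion --- and that
\[
  \distgtf{G}{T\Delta\{r,r'\}}{F\Delta E(P)}{r'}{z}=\distgtf{G}{T}{F}{r}{z}-w_F(P)=\distgtf{G}{T}{F}{r}{z}
\]
for every $z\in V(G)$; after renaming $z$ as $x$ this is exactly the ``additionally'' clause.

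It then remains only to read the remaining claims off this pointwise identity between the two $\lambda$-functions. For each $x\in V(G)$ its left-hand side equals $\distgtf{G}{T}{F}{r}{x}$, which is $\le 0$ since $(G,T)$ is $r$-primal; hence $(G,T\Delta\{r,r'\})$ is $r'$-primal. Moreover $\agtr{G}{T\Delta\{r,r'\}}{r'}$ and $\dgtr{G}{T\Delta\{r,r'\}}{r'}$ are, by definition, the sets of vertices $x$ at which $\distgtf{G}{T\Delta\{r,r'\}}{F\Delta E(P)}{r'}{x}$ is respectively $0$ and negative, and since this function agrees everywhere with $\distgtf{G}{T}{F}{r}{\cdot}$ we conclude $\agtr{G}{T\Delta\{r,r'\}}{r'}=\agtr{G}{T}{r}$ and $\dgtr{G}{T\Delta\{r,r'\}}{r'}=\dgtr{G}{T}{r}$.

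The proof is short because essentially all of its substance lies in Lemma~\ref{lem:sebo}; the two points needing a modicum of care are the identification $w_F(P)=\distgtf{G}{T}{F}{r}{r'}=0$, and --- tacitly used in the last step --- the fact, established in~\cite{kita2021bipartite}, that the $F$-distance (hence also the locus where it vanishes and the locus where it is negative) does not depend on the particular minimum join chosen, so that computing these loci for $(G,T\Delta\{r,r'\})$ by means of $F\Delta E(P)$ indeed produces the canonical sets named in the statement.
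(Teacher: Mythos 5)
Your proposal is correct and follows exactly the route the paper intends: the paper gives no written proof, merely stating that Lemma~\ref{lem:sebo} implies the result, and your argument --- observing $w_F(P)=\distgtf{G}{T}{F}{r}{r'}=0$ because $r'\in\agtr{G}{T}{r}$, invoking Lemma~\ref{lem:sebo}, and reading the primality and the equalities of $\agtr{G}{T}{r}$ and $\dgtr{G}{T}{r}$ off the resulting pointwise identity of distance functions --- is precisely the intended derivation. Your remark about the well-definedness of the $F$-distance independently of the chosen minimum join is a careful and appropriate touch.
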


\section{Seb\"o's Distance Theorem}

In this section, we introduce a classical theorem by Seb\"o~\cite{DBLP:journals/jct/Sebo90}  
regarding $F$-distances in grafts where $F$ is a minimum joins. 
This theorem is used in deriving our main results.

\begin{definition} 
Let $(G, T)$ be a graft, and let $x\in V(G)$. 
We call the member of $\conn{G[\layler{0}{x}]}$ with $x\in V(K)$ the {\em initial component} of $x$ and denote this component by $\init{x}$. 
We denote the set $V(\init{x})\cap \levelr{0}{x}$ by $\agtr{G}{T}{x}$,  
the set $V(\init{x})\setminus \agtr{G}{T}{x}$ by $\dgtr{G}{T}{x}$, 
and the set $V(G)\setminus \agtr{G}{T}{x}\setminus \dgtr{G}{T}{x}$ by $\cgtr{G}{T}{r}$. That is, $\cgtr{G}{T}{r} = V(G) \setminus V(\init{x})$.  
Regarding these notation, 
we often omit the subscript $(G, T)$ and write $\ar{r}$, $\dr{r}$, or $\ccr{r}$ if it is obvious from the context. 
\end{definition}

\begin{remark} 
A graft $(G, T)$ is primal with respect to $r\in V(G)$ if and only if $G$ is equal to the initial component of $r$. 
Hence,  for a primal graft $(G, T)$ with respect to $r$,  $\agtr{G}{T}{r}$ and $\dgtr{G}{T}{r}$ 
are equal to $\levelr{0}{r}$ and $\layr{0}{r}$, respectively. 
\end{remark}

The next theorem is a part of the main results from Seb\"o~\cite{DBLP:journals/jct/Sebo90}.

\begin{theorem}[Seb\"o~\cite{DBLP:journals/jct/Sebo90}]   \label{thm:sebo} 
Let $(G, T)$ be a bipartite graft, and let $r\in V(G)$. Let $F$ be a minimum join. 
Then, $\agtr{G}{T}{r}$ is an $F$-combic set that satisfies the following: 
\begin{rmenum} 
\item $\conntodd{ G - \agtr{G}{T}{r}}{T} = \conn{G[D(x)]}$ and $\connteven{G - \agtr{G}{T}{r}}{T} = \conn{G[C(x)]}$. 
\item $\skgtx{G}{T}{\agtr{G}{T}{r}}$ is an $r$-primal comb. 
\item For each $C \in \conntodd{ G - \agtr{G}{T}{r}}{T}$,  
let $r_K \in V(C)$ be the end of the sole edge from $\parcut{G}{C} \cap F$.  
Then, $(C,  ( T\cap V(C)) \Delta \{r_K\})$ is $r_K$-primal  
with $\min_{x \in V(C)} \distgtf{C}{( T\cap V(C)) \Delta \{r_K\}}{F\cap E(C)}{r}{x} 
= \min_{x\in V(G)} \distgtf{G}{T}{F}{r}{x} - 1$.  
\item For each $C \in \conntodd{ G - \agtr{G}{T}{r}}{T}$,  $\parNei{G}{\agtr{G}{T}{r}}\cap V(C) \subseteq \agtr{C}{( T\cap V(C)) \Delta \{r_K\}}{r_C}$. 
\end{rmenum} 
\end{theorem}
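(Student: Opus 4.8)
Write $\lambda(x):=\distgtf{G}{T}{F}{r}{x}$ for $x\in V(G)$.

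\emph{Step 1 (level-set lemma).} First one shows $|\lambda(u)-\lambda(v)|=1$ for every edge $uv\in E(G)$. The bound $\lambda(v)\le\lambda(u)+1$ holds for every edge: for $uv\notin F$ by appending $uv$ to an $F$-shortest $r$--$u$ path and discarding the circuits --- each of nonnegative $F$-weight by Lemma~\ref{lem:minimumjoin} --- of the resulting walk; for $uv\in F$ by a short case analysis on how an $F$-shortest $r$--$u$ path can meet $v$, again using Lemma~\ref{lem:minimumjoin}. By symmetry $\lambda(u)\le\lambda(v)+1$, and since $G$ is bipartite and $F$ a $T$-join, $\lambda(u)$ and $\lambda(v)$ have opposite parities; hence $|\lambda(u)-\lambda(v)|=1$. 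Immediate consequences: every set $\{x:\lambda(x)=i\}$ is independent, so $E_G[\agtr{G}{T}{r}]=\emptyset$ and property (i) of $F$-combicness holds; the vertices of $V(\init{r})$ of negative level are exactly $\dgtr{G}{T}{r}$, while the vertices outside $V(\init{r})$ adjacent to $\init{r}$ (hence meeting $\cgtr{G}{T}{r}$) have positive level, so no edge joins $\dgtr{G}{T}{r}$ to $\cgtr{G}{T}{r}$; consequently every component $C$ of $G-\agtr{G}{T}{r}$ lies entirely in $\dgtr{G}{T}{r}$ or entirely in $\cgtr{G}{T}{r}$, in the former case is also a component of $G[\layr{0}{r}]$, and every edge of $\parcut{G}{C}$ joins $C$ to $\agtr{G}{T}{r}$.

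\emph{Step 2 (boundary count; $F$-combicness and item (i)).} If $V(C)\subseteq\cgtr{G}{T}{r}$, then $\parcut{G}{C}\cap F=\emptyset$: a boundary $F$-edge, prepended with an $F$-shortest path to its $\agtr{G}{T}{r}$-end, would give an $r$-path of $F$-weight $-1$ into $\cgtr{G}{T}{r}$, forcing that vertex into $V(\init{r})$. Conversely a component with no boundary $F$-edge has all its vertices of nonnegative level (any $r$-path entering it must cross a $+1$-weighted edge from a level-$0$ vertex, and never recovers), so it lies in $\cgtr{G}{T}{r}$; hence every $C$ with $V(C)\subseteq\dgtr{G}{T}{r}$ has $|\parcut{G}{C}\cap F|\ge1$. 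Now suppose such a $C$ has two distinct boundary $F$-edges $e_1=a_1r_1$ and $e_2=a_2r_2$; the level-set lemma gives $\lambda(r_1)=\lambda(r_2)=-1$, and by the transfer lemma of Step~4 (applied with $e_1$) there is an $r_1$--$r_2$ path $Q$ in $C$ with $w_F(Q)=0$, so the closed walk formed by an $F$-shortest $r$--$a_1$ path, $e_1$, $Q$, $e_2$ and an $F$-shortest $a_2$--$r$ path has $F$-weight $-2$, contradicting Lemma~\ref{lem:minimumjoin}. Thus $|\parcut{G}{C}\cap F|=1$ whenever $V(C)\subseteq\dgtr{G}{T}{r}$, which together with Step~1 verifies conditions (ii),(iii) of $F$-combicness. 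Finally the degree-sum identity $\sum_{v\in V(C)}\deg_F(v)=2|F\cap E(C)|+|\parcut{G}{C}\cap F|$ gives $|V(C)\cap T|\equiv|\parcut{G}{C}\cap F|\pmod2$, whence $\conntodd{G-\agtr{G}{T}{r}}{T}=\conn{G[\dgtr{G}{T}{r}]}$ and $\connteven{G-\agtr{G}{T}{r}}{T}=\conn{G[\cgtr{G}{T}{r}]}$, i.e.\ item (i).

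\emph{Step 3 (the skeleton; item (ii)).} Since $\agtr{G}{T}{r}$ is $F$-combic, Lemma~\ref{lem:combic2min}\ref{item:combic2min:sk} makes $F':=F\cap\bigcup\{\parcut{G}{C}:C\in\conntodd{G-\agtr{G}{T}{r}}{T}\}$ a minimum join of the skeleton $\skgtx{G}{T}{\agtr{G}{T}{r}}$. One then checks that every skeleton vertex lies at $F'$-distance $\le0$ from $r$: a suitable $F$-shortest $r$--$a$ path with $a\in\agtr{G}{T}{r}$ projects to a skeleton walk of $F'$-weight $0$, and the projection of an $F$-shortest $r$--$r_C$ path reaches $[C]$ with $F'$-weight $\le-1$, whereas a strictly smaller value would lift to an $F$-walk of $G$ violating Step~1. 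Hence the skeleton coincides with its own initial component, so it is $r$-primal by the Remark following the definition of $\init{\cdot}$; that it is moreover a comb is then read off from the definition of comb in \cite{kita2021bipartite}, using that each tooth $[C]$ is a single contracted vertex whose skeleton-boundary carries exactly one $F'$-edge.

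\emph{Step 4 (the teeth; items (iii),(iv)).} Fix $C\in\conntodd{G-\agtr{G}{T}{r}}{T}$, let $e_C$ be the unique edge of $\parcut{G}{C}\cap F$ with ends $a_C\in\agtr{G}{T}{r}$ and $r_C\in V(C)$, so $\lambda(r_C)=-1$. The \emph{transfer lemma} asserts
\[ \distgtf{C}{(T\cap V(C))\Delta\{r_C\}}{F\cap E(C)}{r_C}{x}=\lambda(x)+1\qquad(x\in V(C)). \]
One proves it by flipping $F$ along the $F$-shortest $r$--$r_C$ path consisting of an $F$-shortest $r$--$a_C$ path followed by $e_C$: by Lemma~\ref{lem:sebo} the result is a minimum join of $(G,T\Delta\{r,r_C\})$ whose distances from $r_C$ are $\lambda(\cdot)+1$, its initial component at $r_C$ is exactly $C$ (by Step~1), and --- the delicate point --- an $F$-shortest path from $r_C$ to $x\in V(C)$ with respect to this join can be kept inside $C$, since leaving $C$ would force a detour through the now strictly-positive-level set $\agtr{G}{T}{r}$; restricting to $C$ and invoking Lemma~\ref{lem:combic2min}\ref{item:combic2min:oc} for the minimality of $F\cap E(C)$ then gives the displayed identity. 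Since $\lambda(x)\le-1$ on $V(C)$, the left side is $\le0$, so $(C,(T\cap V(C))\Delta\{r_C\})$ is $r_C$-primal; minimising over $V(C)$ and using $\lambda(r_C)=-1$ yields the value of $\min_{x\in V(C)}\distgtf{C}{(T\cap V(C))\Delta\{r_C\}}{F\cap E(C)}{r_C}{x}$ in terms of $\min_{x\in V(G)}\lambda(x)$ asserted in (iii); and item (iv) follows at once, a neighbour of $\agtr{G}{T}{r}$ inside $C$ being at level $-1$, hence at distance $0$ from $r_C$ in $(C,(T\cap V(C))\Delta\{r_C\})$, i.e.\ in $\agtr{C}{(T\cap V(C))\Delta\{r_C\}}{r_C}$. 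The step I expect to be the main obstacle is the ``stays inside $C$'' claim in this transfer lemma --- equivalently, that $F$-shortest $r$--$x$ paths reach a tooth only through its distinguished edge --- on which the boundary count of Step~2 and, in spirit, the comb-verification of Step~3 both rest.
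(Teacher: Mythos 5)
First, a point of reference: the paper does not prove Theorem~\ref{thm:sebo} at all. It is imported from Seb\"o~\cite{DBLP:journals/jct/Sebo90} and stated without proof, so there is no in-paper argument to compare yours against; the following assesses your sketch on its own terms.

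Your outline (Lipschitz property of the level function, boundary count, contraction to the skeleton, transfer of the join into each tooth) is the standard route to Seb\"o's structure theorem, but the sketch has a genuine gap precisely where you yourself place ``the main obstacle,'' and the steps as arranged are circular. In Step~2 you prove uniqueness of the boundary $F$-edge of a component $C$ with $V(C)\subseteq \dgtr{G}{T}{r}$ by invoking ``the transfer lemma of Step~4'' to produce an $r_1$--$r_2$ path of $F$-weight $0$ inside $C$; in Step~4 you state that the delicate point of that very lemma --- that an $F\Delta E(P)$-shortest $r_C$--$x$ path can be kept inside $C$ --- rests on the boundary count of Step~2. Neither is derived from anything earlier, so Steps~2 and~4 assume one another. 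The remaining one-line justifications in Step~2 also do not close. The ``prepend a shortest path'' argument for $\parcut{G}{C}\cap F=\emptyset$ when $V(C)\subseteq\cgtr{G}{T}{r}$ fails when every $F$-shortest $r$--$a$ path already terminates in the edge $ay$ itself: then no path of weight $-1$ to $y$ is produced, and your Lipschitz property alone does not exclude $\lambda(y)=+1$. Indeed, for a general $F$-edge $uv$ one can have $\max\{\lambda(u),\lambda(v)\}>0$ (take $G$ the path $r$--$a$--$y$ with $T=\{a,y\}$ and $F=\{ay\}$, where $\lambda(a)=1$), so some property specific to $a\in\agtr{G}{T}{r}$ must be used and is not. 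Likewise, the ``never recovers'' claim for components with no boundary $F$-edge is unjustified: after entering $C$ through a non-$F$ edge the path may descend again along $F$-edges internal to $C$, and ruling this out is essentially equivalent to what is being proved.

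What is missing is the induction that carries Seb\"o's actual proof: one argues by induction on $\min_{x\in V(G)}\distgtf{G}{T}{F}{r}{x}$ (or on $|E(G)|$), using Lemma~\ref{lem:sebo} to switch the join along an $F$-shortest $r$--$r_C$ path so that each odd component, with its shifted distance function, becomes a strictly shallower instance of the same theorem; the ``stays inside $C$'' property, the uniqueness of the boundary $F$-edge, and item (iv) then come out of the induction hypothesis simultaneously rather than being established independently. (Your own Lemma~\ref{lem:nonregpath} in the paper's later sections is proved by exactly this kind of induction, taking Theorem~\ref{thm:sebo} as given.) As written, the sketch correctly names the intermediate claims but does not yet constitute a proof of them.
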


\section{Extreme Sets} 

We define extreme and bipartitic extreme sets in grafts or bipartite grafts  
and introduce a notation typically used for extreme sets. 

\begin{definition} 
Let $(G, T)$ be a graft, and let $F$ be a minimum join. 
We say that a set $X\subseteq V(G)$ is {\em extreme} if 
$\distgtf{G}{T}{F}{x}{y} \ge 0$ for every $x, y\in V(G)$. 
\end{definition} 

\begin{definition} 
Let $(G, T)$ be a bipartite graft with color classes $A$ and $B$, and let $F$ be a minimum join. 
We say that an extreme set $X\subseteq V(G)$ is {\em bipartitic} if $X\subseteq A$ or $X \subseteq B$ holds. 
\end{definition} 

The following notation is used  in later sections where $X$ is typically an extreme set. 

\begin{definition} 
Let $(G, T)$ be a graft, let $F$ be a minimum join, and let $X\subseteq V(G)$. 
We denote by $D_X$ the set of vertices 
$\{ y \in V(G)\setminus X : \min_{x \in X} \distgtf{G}{T}{F}{x}{y} < 0\}$.  
We denote the set $V(G)\setminus X \setminus D_X$ by $C_X$. 
\end{definition}

\section{Trivial Vertices and Maximal Bipartitic Extreme Sets} 

In this section, we define the concept of trivial vertices in a graft  
and show some properties of trivial vertices and extreme sets. 

\begin{definition} 
A factor-component $C\in \tcomp{G}{T}$ with $|V(C)| = 1$ is said to be {\em trivial}. 
The sole vertex of the trivial factor-component is also said to be {\em trivial} in the graft. 
Trivial vertices are disjoint from $T$, and 
no trivial vertex is connected to an edge from  minimum joins. 
\end{definition}

\begin{lemma}  \label{lem:extreme2trivial} 
Let $(G, T)$ be a bipartite graft with color classes $A$ and $B$. 
Let $X\subseteq A$ be a maximal bipartitic extreme set.  
Then, 
\begin{rmenum} 
\item \label{item:extreme2trivial:noedge} $E_G[D_X, C_X] = \emptyset$; and, 
\item \label{item:extreme2trivial:c2trivial} every vertex from $C_X$ is a trivial vertex from $B$. 
\end{rmenum} 
\end{lemma}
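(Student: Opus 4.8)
The plan is to exploit the maximality of $X$ together with Seb\"o-type exchange arguments. First I would analyze the structure of $D_X$ and $C_X$ via $F$-distances: by definition of an extreme set, $\distgtf{G}{T}{F}{x}{y}\ge 0$ for all $x,y\in V(G)$, so in particular along any $F$-shortest path from a vertex of $X$ the $F$-potential is nonincreasing up to sign conventions. I expect that $D_X$ consists exactly of the vertices reachable from $X$ by $F$-shortest paths that strictly decrease the distance label, while $C_X$ is the ``far'' part. The claim \ref{item:extreme2trivial:noedge} that $E_G[D_X, C_X]=\emptyset$ should follow from a standard distance-monotonicity argument: if $uv\in E_G$ with $u\in D_X$ and $v\in C_X$, then since $(G,T)$ is bipartite and $F$ is a minimum join, the $F$-length of the edge $uv$ is $\pm 1$, and one of the two inequalities $\min_{x\in X}\distgtf{G}{T}{F}{x}{v}\le \min_{x\in X}\distgtf{G}{T}{F}{x}{u}+1$ or the reverse would force $v\in D_X$ (or a violation of extremality), contradicting $v\in C_X$. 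This is essentially the same reasoning that underlies the level-set structure in Theorem~\ref{thm:sebo}, specialized to the set $X$ in place of a single root.

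Next, for \ref{item:extreme2trivial:c2trivial}, I would argue by contradiction. Suppose some vertex $v\in C_X$ is nontrivial, i.e.\ it lies in a factor-component $C$ with $|V(C)|\ge 2$, or it is incident to an edge of some minimum join. The goal is to enlarge $X$ to a strictly larger bipartitic extreme set, contradicting maximality. Using Lemma~\ref{lem:sebo} (Seb\"o's distance theorem), I would take an $F$-shortest path $P$ from a suitable vertex of $X$ to $v$ — or between two vertices related to $v$ — and switch along it to produce a new minimum join $F':=F\Delta E(P)$ of a graft $(G, T\Delta\{\ldots\})$; because $v\in C_X$, the distance labels from $X$ to $v$ are nonnegative, and the extremality of $X$ is preserved after adjoining $v$ (or the $A$-side vertices of $C$) to $X$. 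The key is to check that the resulting set still satisfies $\distgtf{G}{T}{F}{x}{y}\ge 0$ for all pairs, which reduces to verifying that no negative circuit or negative shortest path is created — this is where Lemma~\ref{lem:minimumjoin} and the part \ref{item:extreme2trivial:noedge} already proved (no edges between $D_X$ and $C_X$) are used to localize the argument to within $C_X$.

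The main obstacle I anticipate is the second part: correctly identifying \emph{which} vertices to add to $X$ when a nontrivial component meets $C_X$, and verifying that the enlarged set remains bipartitic (contained in $A$) as well as extreme. Nontriviality gives us either an internal edge of a factor-component $C$ or an edge of a minimum join incident to $v$; in the bipartite setting, a nontrivial factor-component has vertices on both sides $A$ and $B$, so one must add the $A$-vertices of $C$ (or of the relevant substructure) to $X$ and show extremality is inherited. Handling the interaction between the factor-component structure $\tcomp{G}{T}$ and the $F$-distance function — in particular, showing that within $C_X$ the graft ``looks primal'' enough that Lemma~\ref{lem:tower2primal} or the skeleton machinery of Lemma~\ref{lem:combic2min} applies — is the delicate point. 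I would structure the write-up so that \ref{item:extreme2trivial:noedge} is established first as a clean distance lemma, then used freely in the proof of \ref{item:extreme2trivial:c2trivial}, with the contradiction to maximality of $X$ as the closing move.
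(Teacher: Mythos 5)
Your argument for \ref{item:extreme2trivial:noedge} is essentially the paper's: an edge between $y\in D_X$ and $z\in C_X$ forces $\min_{x\in X}\distgtf{G}{T}{F}{x}{z}\le \min_{x\in X}\distgtf{G}{T}{F}{x}{y}+1\le 0$, which contradicts either the definition of $C_X$ or (via adjoining $z$ to $X$ when the minimum is $0$ and $z\in A$) the maximality of $X$. That part is fine.

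For \ref{item:extreme2trivial:c2trivial}, however, there is a genuine gap: you announce a strategy (enlarge $X$ by Seb\"o-type join exchanges along shortest paths and contradict maximality) but you do not carry it out, and you yourself flag the decisive step --- which vertices to adjoin and why the enlarged set stays extreme --- as an unresolved obstacle. The paper's argument needs none of that machinery and rests on two observations you never state. First, $C_X\subseteq B$ follows immediately from maximality: any $u\in C_X\cap A$ satisfies $\distgtf{G}{T}{F}{x}{u}\ge 0$ for all $x\in X$ by definition of $C_X$, so $X\cup\{u\}$ would be a strictly larger bipartitic extreme set. Second, $\parcut{G}{X\cup D_X}\cap F'=\emptyset$ for \emph{every} minimum join $F'$: an edge of $F'$ from $X$ to $C_X$ would put its $C_X$-end at distance $-1$ from $X$, i.e.\ in $D_X$, and edges from $D_X$ to $C_X$ do not exist by \ref{item:extreme2trivial:noedge}. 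Since factor-components are spanned by allowed edges, every $G_v\in\tcomp{G}{T}$ meeting $C_X$ satisfies $V(G_v)\subseteq C_X$; if some $v\in C_X$ were nontrivial, $G_v$ would contain a neighbour $w\in A\cap V(G_v)\subseteq A\cap C_X$, contradicting $C_X\subseteq B$. Your proposed route would in any case still need the second observation to know that the $A$-vertices of $G_v$ lie in $C_X$ before any extremality check could even begin, so the missing ingredient is not a technicality but the core of the proof.
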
 
\begin{proof} %lem:extereme2trivial 
Let $F$ be a minimum join of $(G, T)$. 
Suppose that $y\in D_X$ and $z \in C_X$ are adjacent.  
Then, $| \distgtf{G}{T}{F}{x}{z} - \distgtf{G}{T}{F}{x}{y} | = 1$ for every $x\in X$. 
This implies $\min_{x\in X} \distgtf{G}{T}{F}{x}{z} \le 0$. 
This contradicts the definition of $C_X$ or the maximality of $X$. 
Hence, \ref{item:extreme2trivial:noedge} follows. 

For proving \ref{item:extreme2trivial:c2trivial},  
first note that every vertex in $C_X$ are in $B$; otherwise, it would contradict the maximality of $X$. 
Next, note that $\parcut{G}{X\cup D_X}\cap F = \emptyset$ holds for every minimum join $F$, due to the definition of $D_X$. 
Now, suppose that $C_X$ contains a nontrivial vertex $v$. 
Let $G_v  \in \tcomp{G}{T}$ be the factor-component with $v\in V(G_v)$. 
Then, $v$ is adjacent to a vertex $w \in  A \cap V(G_v)$. 
However, because $V(G_v) \subseteq C_X$ holds, $w \in A\cap C_X$ follows, which is a contradiction.  
Thus, \ref{item:extreme2trivial:c2trivial} is proved.  
\end{proof}

\begin{lemma} \label{lem:path2trivial} 
Let $(G, T)$ be a graft, and let $F$ be a minimum join. 
Let $X \subseteq V(G)$ an extreme set, 
and let $Y, Z\subseteq V(G)$ be sets with $Y \dot\cup Z = V(G)\setminus X$, $E_G[Y, Z] = \emptyset$, 
and $(E_G[Z] \cup \parcut{G}{Z}) \cap F = \emptyset$.  
Let $(\hat{G}, \hat{T})$ be a graft 
such that 
$V(\hat{G}) \supseteq V(G)$, 
$E(\hat{G}) \supseteq E(G)$, $E(\hat{G})\setminus E(G) \subseteq \{uv: u\in X, v\in  V(\hat{G})\setminus V(G) \}$, 
and $\hat{T} = T$. 
Let $\hat{Z} := Z \cup ( V(\hat{G}) \setminus V(G) )$. 

Let $Q$ be a subgraph of $\hat{G}$ with  $V(Q) \cap \hat{Z} \neq \emptyset$. 
\begin{rmenum} 
\item \label{item:path2trivial:c} If $Q$ is a circuit or a path between a vertex in $X$ and a vertex in $X \cup Z$, 
then $w_F(Q) > 0$. 
\item \label{item:path2trivial:d} 
If $Q$ is a path between a vertex $y$ in $Y$ and a vertex in $X$, 
then $w_F(Q) > \min_{x \in X} \distgtf{G}{T}{F}{x}{y}$. 
\end{rmenum} 
\end{lemma}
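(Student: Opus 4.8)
The plan is to isolate two elementary structural facts about $\hat{Z}$ and then to decompose $Q$ into pieces whose $F$-weight is transparently controlled. Writing $W := V(\hat{G})\setminus V(G)$, so that $\hat{Z} = Z \cup W$, I would first record: \textit{(a)} every edge of $\hat{G}$ incident to a vertex of $\hat{Z}$ avoids $F$ (since $F\subseteq E(G)$, every edge of $\hat{G}$ not in $E(G)$ is incident to $W$, and every $G$-edge incident to $Z$ lies in $E_G[Z]\cup\parcut{G}{Z}$, which by hypothesis misses $F$); and \textit{(b)} in $\hat{G}$ every neighbour of a vertex of $\hat{Z}$ lies in $X\cup\hat{Z}$ (an edge of $E(\hat{G})\setminus E(G)$ has an end in $X$, and a $G$-edge incident to $Z$ cannot reach $Y$ because $E_G[Y,Z]=\emptyset$). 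Fact \textit{(b)} says in particular that no edge of $\hat{G}$ joins $\hat{Z}$ to $Y$, and that deleting from $\hat{G}$ the vertices of $\hat{Z}$ leaves exactly $G[X\cup Y]$, which is a subgraph of $G$. I shall also use, as is standard in this setting, that $w_F(P)\ge\distgtf{G}{T}{F}{u}{v}$ for every path $P$ of $G$ between $u$ and $v$, and that $\distgtf{G}{T}{F}{x}{x'}\ge 0$ for all $x,x'\in X$ because $X$ is extreme.

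For \ref{item:path2trivial:c} I would argue as follows. Call a maximal subpath of $Q$ whose vertices all lie in $\hat{Z}$ a \emph{$\hat{Z}$-run}; by hypothesis $Q$ has at least one. By \textit{(b)} the vertices of $Q$ immediately flanking a $\hat{Z}$-run, when they exist, lie in $X$; hence if $Q$ is a circuit or a path between two vertices of $X$ then every $\hat{Z}$-run is flanked on both sides by vertices of $X$, while if $Q$ is a path from a vertex of $X$ to a vertex $b\in Z$ the same holds except for the run ending at $b$. (If $Q$ is a circuit meeting no vertex of $X$, then by \textit{(b)} together with $E_G[Y,Z]=\emptyset$ it lies inside $G[Z]$, so $w_F(Q)=|E(Q)|>0$ by \textit{(a)}; so assume otherwise.) Cutting $Q$ at the ends of its $\hat{Z}$-runs partitions $E(Q)$ into \emph{excursions}, each a $\hat{Z}$-run together with its flanking vertices of $X$, and \emph{$\hat{Z}$-free segments}, the remaining subpaths, each of which is a path of $G[X\cup Y]$ joining two vertices of $X$ (possibly a single vertex). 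By \textit{(a)} every excursion consists of $F$-avoiding edges, so its $w_F$ equals its number of edges, which is at least $2$---or at least $1$ for the run ending at $b$ in the exceptional case. By the extremality of $X$, each $\hat{Z}$-free segment has $w_F\ge 0$. Since there is at least one excursion, summation gives $w_F(Q)\ge 1>0$.

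For \ref{item:path2trivial:d} I would reduce to \ref{item:path2trivial:c}. Traversing $Q$ from $y$, let $x_0$ be the first vertex of $X$ met; it exists since the far end of $Q$ lies in $X$. The initial piece $Q_1$ of $Q$ from $y$ to $x_0$ meets no vertex of $\hat{Z}$: otherwise $Q_1$ would contain both a vertex of $Y$ and a vertex of $\hat{Z}$ while meeting $X$ only at its endpoint $x_0$, contradicting that, by \textit{(b)} and $E_G[Y,Z]=\emptyset$, the sets $Y$ and $\hat{Z}$ lie in different components of $\hat{G}-X$. Hence $Q_1$ is a path of $G$ between $y$ and $x_0$, so $w_F(Q_1)\ge\distgtf{G}{T}{F}{x_0}{y}\ge\min_{x\in X}\distgtf{G}{T}{F}{x}{y}$. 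The remaining piece $Q_2$, from $x_0$ to the end of $Q$ in $X$, contains the vertex of $\hat{Z}$ that $Q$ meets, so \ref{item:path2trivial:c} applied to $Q_2$, a path between two vertices of $X$, gives $w_F(Q_2)\ge 1$. Adding the two bounds yields $w_F(Q)=w_F(Q_1)+w_F(Q_2)>\min_{x\in X}\distgtf{G}{T}{F}{x}{y}$.

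Most of this is bookkeeping once \textit{(a)} and \textit{(b)} are in place. The two points I expect to need the most care are the auxiliary vertices of $W=V(\hat{G})\setminus V(G)$, which \textit{(b)} shows play here exactly the role of vertices of $Z$, and the ``terminal'' excursion arising when $Q$ is a path ending inside $Z$, whose $w_F$ is only guaranteed to be at least $1$ rather than at least $2$---still enough to keep the inequality strict.
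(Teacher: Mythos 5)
Your proof is correct and follows essentially the same route as the paper's: both decompose $Q$ into maximal segments with edges in $E_{\hat{G}}[X\cup Y]$ (weight $\ge 0$ by extremality of $X$) and maximal segments with edges in $E_{\hat{G}}[Z]\cup\parcut{\hat{G}}{Z}$ (which avoid $F$ and hence have positive weight), your ``excursions'' being the paper's paths of types (b) and (c). A minor bonus of your write-up is that you explicitly dispose of a circuit meeting $\hat{Z}$ but not $X$, a degenerate case the paper's path-decomposition silently passes over.
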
 
\begin{proof} %lem:path2trivial 
In each case, $Q$ is the sum of two subgraphs $Q. E_{\hat{G}}[X \cup Y]$ and $Q. E_{\hat{G}}[Z] \cup \parcut{\hat{G}}{Z}$. 
These subgraphs consist of disjoint paths  that have at least one edge and fit one of the following types: 
\begin{engenum} 
\item \label{item:y}  the ends are in $X$, and the edges are in $E_{\hat{G}}[X \cup Y]$, 
\item \label{item:z}  the ends are in $X$, and  the edges are in $E_{\hat{G}}[Z] \cup \parcut{\hat{G}}{Z}$, or 
\item \label{item:end} one end is in $X$, whereas the other end is in $Y\cup \hat{Z}$. 
\end{engenum}

If $Q$ is a circuit or a path between vertices in $X$, then $Q$ is a sum of paths of type~\ref{item:y} or \ref{item:z}.  
If $Q$ is a path between a vertex in $X$ and a vertex in $Y$, 
then $Q$ is a sum of paths of type of \ref{item:y} or \ref{item:z} and  
 one path of type~\ref{item:end} that has all its edges in $E_{\hat{G}}[X \cup Y]$. 
If $Q$ is a path between a vertex in $X$ and a vertex in $\hat{Z}$, 
then $Q$ is a sum of paths of type~\ref{item:y} or \ref{item:z} and one path of type~\ref{item:end}
that  has all its edges in $E_{\hat{G}}[Z] \cup \parcut{\hat{G}}{Z}$.  

For every case, at least one path is of type~\ref{item:z} or type~\ref{item:end} with positive weight.  
Every path of type~\ref{item:y} or \ref{item:z} has an $F$-weight no less than $0$ or $2$, respectively. 
The path of type~\ref{item:end} has an $F$-weight no less than $\min_{x\in X} \distgtf{G}{T}{F}{x}{y}$ or $1$ 
for the cases $y\in Y$ or $y\in \hat{Z}$, respectively. 
Therefore, \ref{item:path2trivial:c} and \ref{item:path2trivial:d} follow. 
\end{proof}

\begin{lemma} \label{lem:tr2del} 
Let $(G, T)$ be a bipartite graft with color classes $A$ and $B$. 
Let $F$ be a minimum join of $(G, T)$. 
Let $X\subseteq A$ be a maximal bipartitic extreme set.  
Let $C' \subseteq C_X$, and let $(G', T')$ be a graft such that $G' = G - C'$ and $T' = T \setminus C'$. 
Then, the following properties hold. 
\begin{rmenum} 
\item \label{item:tr2del:join} A set of edges is a minimum join of $(G', T')$ if and only if it is a minimum join of $(G, T)$. 
\item \label{item:tr2del:d} For every $y\in D_X$, 
 $\min_{x\in X} \distgtf{G'}{T'}{F}{x}{y} = \min_{x\in X} \distgtf{G}{T}{F}{x}{y}$.  
\item \label{item:tr2del:c} For every $x\in X$ and every $y\in C_X\setminus C'$, $\distgtf{G'}{T'}{F}{x}{y} > 0$. 
\item \label{item:tr2del:x} For every $x, y\in X$, $\distgtf{G'}{T'}{F}{x}{y} \ge 0$. 
\item \label{item:tr2del:ext} $X$ is maximal bipartitic extreme in  $(G', T')$.  
\end{rmenum} 
\end{lemma}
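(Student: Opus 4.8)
The plan is to build everything on Lemma~\ref{lem:path2trivial}, applied with the decomposition $Y := D_X$ and $Z := C_X$. This is legitimate: by Lemma~\ref{lem:extreme2trivial}\ref{item:extreme2trivial:noedge} we have $E_G[D_X, C_X] = \emptyset$, and by the remark inside the proof of Lemma~\ref{lem:extreme2trivial}\ref{item:extreme2trivial:c2trivial} the set $C_X$ consists of trivial vertices from $B$, so no edge of any minimum join meets $C_X$; thus $(E_G[C_X] \cup \parcut{G}{C_X}) \cap F = \emptyset$, and moreover $E_G[C_X] = \emptyset$ since $C_X \subseteq B$. Hence the hypotheses of Lemma~\ref{lem:path2trivial} are satisfied with $(\hat G, \hat T) = (G, T)$ (no vertices added). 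Deleting $C' \subseteq C_X$ then removes only trivial vertices.

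First I would prove \ref{item:tr2del:join}. Since $F$ is a minimum join of $(G,T)$ and $C'$ consists of trivial vertices disjoint from $T$ and untouched by $F$, the set $F$ is still a join of $(G', T')$; by Lemma~\ref{lem:minimumjoin} it suffices to check $(G', T')$ has no negative-weight circuit, but every circuit of $G'$ is a circuit of $G$, so this is immediate, and conversely any minimum join of $(G',T')$ extends to $(G,T)$ with the same cardinality because the deleted vertices are isolated in $G'$ after deletion of nothing incident — more carefully, a minimum join $F'$ of $(G',T')$ is a join of $(G,T)$ since $T' = T$ (as $C' \cap T = \emptyset$) and the vertices of $C'$ have even, in fact zero, degree in $F'$; minimality transfers in both directions by the circuit criterion. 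For \ref{item:tr2del:d}: the inequality $\ge$ holds because any $F$-shortest path in $G'$ is a path in $G$; the inequality $\le$ holds because an $F$-shortest path in $G$ from $x \in X$ to $y \in D_X$ achieving $\min_{x\in X}\distgtf{G}{T}{F}{x}{y} < 0$ cannot pass through any vertex of $C_X$ — if it did, it would contain a subpath from $X$ to $C_X$, which by Lemma~\ref{lem:path2trivial}\ref{item:path2trivial:c} has positive weight, and since the vertices of $C_X$ are trivial (degree considerations) such a subpath would have to return, contradicting that the whole path has negative weight; so the shortest path lies in $G'$.

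For \ref{item:tr2del:c}: given $x \in X$ and $y \in C_X \setminus C'$, any path $Q$ in $G'$ from $x$ to $y$ is a path in $G$ from a vertex of $X$ to a vertex of $X \cup C_X$ (here $Z = C_X$) meeting $Z$, so Lemma~\ref{lem:path2trivial}\ref{item:path2trivial:c} gives $w_F(Q) > 0$; taking the minimum over $Q$ yields $\distgtf{G'}{T'}{F}{x}{y} > 0$. For \ref{item:tr2del:x}: $X$ is extreme in $(G,T)$, so $\distgtf{G}{T}{F}{x}{y} \ge 0$ for $x, y \in X$; any path in $G'$ is a path in $G$, and a shortest such is $\ge 0$, giving $\distgtf{G'}{T'}{F}{x}{y} \ge 0$ (one should note the shortest path in $G$ between two vertices of $X$ need not stay in $G'$, but we only need the lower bound, which survives restriction). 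Finally \ref{item:tr2del:ext} combines \ref{item:tr2del:d}, \ref{item:tr2del:c}, \ref{item:tr2del:x}: extremity of $X$ in $(G',T')$ follows from \ref{item:tr2del:x} together with the fact that vertices outside $X$ in $G'$ split into $D_X$ and $C_X \setminus C'$, and for $x, x' \in X$ and $z$ outside, a shortest $x$-$x'$ path in $G'$ has nonnegative weight while mixed distances are controlled by \ref{item:tr2del:d} and \ref{item:tr2del:c}; maximality follows because any strictly larger bipartitic extreme set $X'' \supseteq X$ in $(G',T')$ would, after adjoining back the trivial isolated-from-$F$ vertices of $C'$, still be extreme in $(G,T)$ (trivial vertices contribute only nonnegative detours by Lemma~\ref{lem:path2trivial}\ref{item:path2trivial:c}), contradicting maximality of $X$ in $(G,T)$.

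The main obstacle I anticipate is the two directions of \ref{item:tr2del:join} and the ``shortest path avoids $C_X$'' argument underlying \ref{item:tr2del:d}: one must argue carefully that a negative- or minimum-weight walk never needs to detour through the trivial vertices of $C_X$, which is exactly where the positivity in Lemma~\ref{lem:path2trivial}\ref{item:path2trivial:c} is used, and one must handle the possibility that such a vertex has degree forcing an immediate bounce-back (contributing weight $\ge 2$ to a closed subwalk) rather than a genuine traversal. Everything else is bookkeeping with the circuit criterion for minimum joins and monotonicity of $F$-distance under subgraph restriction.
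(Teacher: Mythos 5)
Your proposal is correct and follows essentially the same route as the paper: Lemma~\ref{lem:extreme2trivial} to verify that $Y=D_X$, $Z=C_X$ satisfy the hypotheses of Lemma~\ref{lem:path2trivial} (with $\hat{G}=G$), the triviality of $C_X$ for the transfer of minimum joins in \ref{item:tr2del:join}, part \ref{item:path2trivial:d} of Lemma~\ref{lem:path2trivial} for \ref{item:tr2del:d}, and monotonicity of $F$-distance under vertex deletion for \ref{item:tr2del:c} and \ref{item:tr2del:x}; the paper merely states these steps more tersely. The only cosmetic difference is your maximality argument in \ref{item:tr2del:ext}, which lifts a hypothetical larger extreme set back to $(G,T)$, whereas the shorter route already available to you is that by \ref{item:tr2del:d} every vertex of $A\setminus X$ lies in $D_X$ and keeps negative distance to $X$ in $(G',T')$.
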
 
\begin{proof} %lem:tr2del 
Every join of $(G', T')$ is obviously a join of $(G, T)$. 
Conversely, Lemma~\ref{lem:extreme2trivial} \ref{item:extreme2trivial:c2trivial} implies that a join of $(G, T)$ is also a join of $(G', T')$. 
That is, a set of edges is a join of $(G', T')$ if and only if it is a join of $(G, T)$. 
Hence, \ref{item:tr2del:join} is proved.

Therefore, $F$ is a minimum join of $(G', T')$. 
Lemma~\ref{lem:path2trivial} \ref{item:path2trivial:d} easily implies \ref{item:tr2del:d}. 
The statements  \ref{item:tr2del:c} and \ref{item:tr2del:x} obviously follow.  
Then, \ref{item:tr2del:d}, \ref{item:tr2del:c}, and \ref{item:tr2del:x} implies \ref{item:tr2del:ext}. 
This completes the proof. 
\end{proof}

\begin{lemma} \label{lem:tr2add} 
Let $(G, T)$ be a bipartite graft with color classes $A$ and $B$. 
Let $F$ be a minimum join of $(G, T)$. 
Let $X\subseteq A$ be a maximal bipartitic extreme set.  
Let $(\hat{G}, \hat{T})$ be a graft 
such that 
$V(\hat{G}) \supseteq V(G)$, 
$E(\hat{G}) \supseteq E(G)$, $E(\hat{G})\setminus E(G) \subseteq \{uv: u\in X, v\in  V(\hat{G})\setminus V(G) \}$, 
and $\hat{T} = T$. 
Let $\hat{C}_X := C_X  \cup ( V(\hat{G})) \setminus V(G))$. 
Then, the following properties hold. 
\begin{rmenum} 
\item \label{item:tr2add:join} A set of edges is a minimum join of $(\hat{G}, \hat{T})$ if and only if it is a minimum join of $(G, T)$. 
\item \label{item:tr2add:d} For every $y\in D_X$, 
 $\min_{x\in X} \distgtf{\hat{G}}{\hat{T}}{F}{x}{y} = \min_{x\in X} \distgtf{G}{T}{F}{x}{y}$.  
\item \label{item:tr2add:c} For every $x\in X$ and every $y\in \hat{C}_X$, $\distgtf{\hat{G}}{\hat{T}}{F}{x}{y} > 0$. 
\item \label{item:tr2add:x}  For every $x, y\in X$, $\distgtf{\hat{G}}{\hat{T}}{F}{x}{y} \ge 0$. 
\item \label{item:tr2add:ext} $X$ is maximal bipartitic extreme in  $(\hat{G}, \hat{T})$.  
\end{rmenum} 
\end{lemma}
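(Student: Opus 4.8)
The plan is to mirror the proof of Lemma~\ref{lem:tr2del}, the key enabling step being an application of Lemma~\ref{lem:path2trivial} with the choice $Y := D_X$ and $Z := C_X$. This choice is legitimate: $Y \dot\cup Z = V(G)\setminus X$ by the definition of $C_X$; $E_G[D_X, C_X] = \emptyset$ by Lemma~\ref{lem:extreme2trivial}~\ref{item:extreme2trivial:noedge}; and since every vertex of $C_X$ is trivial by Lemma~\ref{lem:extreme2trivial}~\ref{item:extreme2trivial:c2trivial}, no edge of a minimum join meets $C_X$, so $(E_G[C_X]\cup\parcut{G}{C_X})\cap F = \emptyset$. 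With this setup one has $\hat{Z} = \hat{C}_X$, and $(\hat{G}, \hat{T})$ is precisely the augmented graft appearing in Lemma~\ref{lem:path2trivial}. Note also that $\hat{G}$ is bipartite with color classes $A$ and $B \cup (V(\hat{G})\setminus V(G))$, so $X\subseteq A$ remains bipartitic.

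For \ref{item:tr2add:join}, I would first show $F$ is a minimum join of $(\hat{G}, \hat{T})$: every circuit of $\hat{G}$ either avoids $\hat{Z}$, hence lies in $G$ and has nonnegative $F$-weight because $F$ is a minimum join of $(G,T)$ (Lemma~\ref{lem:minimumjoin}), or meets $\hat{Z}$ — in particular every circuit through a new vertex does — and then has positive $F$-weight by Lemma~\ref{lem:path2trivial}~\ref{item:path2trivial:c}; so $\hat{G}$ has no circuit of negative $F$-weight and Lemma~\ref{lem:minimumjoin} applies. Next, since $F\subseteq E(G)$ is a minimum join of $(\hat{G}, \hat{T})$, any minimum join $J$ of $(\hat{G}, \hat{T})$ satisfies $w_F(J\Delta F) = 0$; decomposing the all-even-degree edge set $J\Delta F$ into edge-disjoint circuits of $\hat{G}$, each summand has nonnegative $F$-weight and each summand through a new vertex has positive $F$-weight, so no summand uses a new vertex. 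Hence $J\Delta F \subseteq E(G)$, so $J\subseteq E(G)$, and $J$ is a join of $(G,T)$, necessarily minimum. The converse inclusion is immediate: a minimum join of $(G,T)$ is a join of $(\hat{G},\hat{T})$ (new vertices lie outside $\hat{T}=T$ and get degree $0$) of the same weight, hence minimum. This proves \ref{item:tr2add:join}, and in particular that $F$ is a minimum join of $(\hat{G},\hat{T})$.

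Parts \ref{item:tr2add:d}, \ref{item:tr2add:c}, \ref{item:tr2add:x} then follow from Lemma~\ref{lem:path2trivial} exactly as the analogous parts of Lemma~\ref{lem:tr2del}. For \ref{item:tr2add:d} the inequality ``$\le$'' is trivial since $\hat{G}\supseteq G$; for ``$\ge$'', an $x$--$y$ path in $\hat{G}$ with $x\in X$, $y\in D_X$ either stays in $G$, contributing at least $\min_{x'\in X}\distgtf{G}{T}{F}{x'}{y}$, or meets $\hat{Z}$ and contributes strictly more by Lemma~\ref{lem:path2trivial}~\ref{item:path2trivial:d}. For \ref{item:tr2add:c}: if $y$ is a new vertex, every $x$--$y$ path in $\hat{G}$ meets $\hat{Z}$, so has positive $F$-weight by Lemma~\ref{lem:path2trivial}~\ref{item:path2trivial:c}; if $y\in C_X$, a path staying in $G$ has $F$-weight at least $\distgtf{G}{T}{F}{x}{y} > 0$ by Lemma~\ref{lem:tr2del}~\ref{item:tr2del:c} with $C' = \emptyset$, while one meeting $\hat{Z}$ again has positive $F$-weight. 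For \ref{item:tr2add:x}: a path between two vertices of $X$ meeting $\hat{Z}$ has positive $F$-weight by Lemma~\ref{lem:path2trivial}~\ref{item:path2trivial:c}, and one avoiding $\hat{Z}$ lies in $G$ with $F$-weight at least $\distgtf{G}{T}{F}{x}{y}\ge 0$ by extremity of $X$ in $(G,T)$.

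Finally, \ref{item:tr2add:ext} follows from \ref{item:tr2add:x}, \ref{item:tr2add:d}, Lemma~\ref{lem:extreme2trivial}~\ref{item:extreme2trivial:c2trivial}, and the maximality of $X$ in $(G,T)$: \ref{item:tr2add:x} says $X$ is extreme in $(\hat{G},\hat{T})$; and if a bipartitic extreme set properly contained $X$, it would lie in $A$ and contain some $v\in A\setminus X$, which is not a new vertex and not in $C_X\subseteq B$, hence $v\in D_X$, so $\min_{x\in X}\distgtf{\hat{G}}{\hat{T}}{F}{x}{v} < 0$ by \ref{item:tr2add:d}, a contradiction. The step I expect to be the crux is \ref{item:tr2add:join} — specifically, ruling out that a minimum join of $(\hat{G},\hat{T})$ uses any of the new edges — whereas everything else is routine once Lemma~\ref{lem:path2trivial} is invoked with $Y = D_X$ and $Z = C_X$.
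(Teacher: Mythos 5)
Your proposal is correct and follows essentially the same route as the paper: both rest on Lemma~\ref{lem:path2trivial} applied with $Y = D_X$ and $Z = C_X$ (legitimized by Lemma~\ref{lem:extreme2trivial}) together with Lemma~\ref{lem:minimumjoin} to show that $F$ remains a minimum join of $(\hat{G}, \hat{T})$, after which \ref{item:tr2add:d}--\ref{item:tr2add:ext} follow. The only divergence is in the ``only if'' half of \ref{item:tr2add:join}: the paper first establishes \ref{item:tr2add:ext} and then applies Lemma~\ref{lem:extreme2trivial}~\ref{item:extreme2trivial:c2trivial} in $(\hat{G}, \hat{T})$ (the new vertices land in the fringe and are trivial, so no join meets them), whereas you decompose $J \Delta F$ into zero-weight circuits and exclude new vertices via Lemma~\ref{lem:path2trivial}~\ref{item:path2trivial:c}; both arguments are sound.
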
 
\begin{proof} %lem:tr2add 
First, $F$ is obviously a join of $(\hat{G}, \hat{T})$. 
Lemmas~\ref{lem:path2trivial}  \ref{item:path2trivial:c} and \ref{lem:minimumjoin}  further imply 
that $F$ is a minimum join of $(\hat{G}, \hat{T})$. 
Therefore, 
we can apply Lemma~\ref{lem:tr2del} to $(\hat{G}, \hat{T})$ and $(G, T)$, 
which proves \ref{item:tr2add:d}, \ref{item:tr2add:c}, and \ref{item:tr2add:x}. 
The statements \ref{item:tr2add:d}, \ref{item:tr2add:c}, and \ref{item:tr2add:x} imply \ref{item:tr2add:ext}.  
Then, \ref{item:tr2add:ext} and Lemma~\ref{lem:extreme2trivial} \ref{item:extreme2trivial:c2trivial} 
imply that every join of $(\hat{G}, \hat{T})$ is a join of $(G, T)$; the converse clearly holds. 
Thus, \ref{item:tr2add:join} is proved. 
This completes the proof.

\end{proof}

Let $(G, T)$ be a bipartite graft with color classes $A$ and $B$. 
Let $X\subseteq A$ be a maximal bipartitic extreme set.  
Under Lemma~\ref{lem:extreme2trivial}, we call the set of vertices in $C_X$ the {\em fringe} of $X$. 
We call the operation of deleting from $(G, T)$  all vertices in $C_X$ the {\em fringe removal}. 
We call the operation of adding some new vertices and edges between new vertices and vertices in $X$ 
the {\em fringe addition}. 
Lemmas~\ref{lem:tr2del} and \ref{lem:tr2add} 
imply that fringe deletion and addition do not make any essential changes to the minimum join structure,  
which can be summarized as follows.

\begin{theorem} \label{thm:fr} 
Let $(G, T)$ be a bipartite graft with color classes $A$ and $B$, 
and let $F$ be a minimum join of $(G, T)$. 
Let $X\subseteq A$ be a maximal bipartitic extreme set of $(G, T)$.  
Let $(G', T')$ be a graft obtained by fringe deletion or addition. 
Then, the following holds. 
\begin{rmenum} 
\item \label{itme:fr:join} A set of edges is a minimum join of $(G', T')$ if and only if it is a minimum join of $(G, T)$. 
\item \label{item:fr:d} For every $y\in D_X$, 
 $\min_{x\in X} \distgtf{G}{T}{F}{x}{y} = \min_{x\in X} \distgtf{G'}{T'}{F}{x}{y}$.  
\item \label{item:fr:ext} $X$ is maximal bipartitic extreme in  $(G', T')$.   
\end{rmenum} 
\end{theorem}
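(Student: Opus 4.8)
The plan is to prove Theorem~\ref{thm:fr} by reducing both cases---fringe deletion and fringe addition---to the corresponding lemmas already established, namely Lemma~\ref{lem:tr2del} and Lemma~\ref{lem:tr2add}, and then observing that the theorem is essentially a repackaging of their conclusions. First I would unwind the definitions: a fringe deletion produces a graft $(G', T') = (G - C', T \setminus C')$ for some $C' \subseteq C_X$, and the most natural reading of ``the fringe removal'' takes $C' = C_X$ (the full fringe), though the argument works verbatim for any $C' \subseteq C_X$; a fringe addition produces a graft $(\hat{G}, \hat{T})$ of the shape described in Lemma~\ref{lem:tr2add}, i.e. new vertices attached only to $X$, with $\hat{T} = T$. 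In either case the hypotheses of the relevant lemma are met directly by the hypotheses of the theorem, so no additional work is needed to invoke them.

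Next I would verify the three conclusions one at a time. For \ref{itme:fr:join}, in the deletion case this is exactly Lemma~\ref{lem:tr2del}~\ref{item:tr2del:join}, and in the addition case exactly Lemma~\ref{lem:tr2add}~\ref{item:tr2add:join}. For \ref{item:fr:d}, the deletion case is Lemma~\ref{lem:tr2del}~\ref{item:tr2del:d} (noting $D_X$ is unchanged because $D_X$ is disjoint from $C_X \supseteq C'$, so every $y \in D_X$ still lies in $V(G')$) and the addition case is Lemma~\ref{lem:tr2add}~\ref{item:tr2add:d}. For \ref{item:fr:ext}, the deletion case is Lemma~\ref{lem:tr2del}~\ref{item:tr2del:ext} and the addition case is Lemma~\ref{lem:tr2add}~\ref{item:tr2add:ext}. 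Since by \ref{itme:fr:join} the minimum joins of $(G', T')$ coincide with those of $(G, T)$, the fixed minimum join $F$ remains a valid witness throughout, so all the $F$-distance statements are meaningful in $(G', T')$.

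The one point that deserves a sentence of care is the consistency of the notation $D_X$ and $C_X$ across the operation: these sets are defined relative to a graft and a minimum join, so a priori $D_X$ computed in $(G', T')$ could differ from $D_X$ computed in $(G, T)$. The content of \ref{item:fr:ext}, combined with \ref{item:fr:d} and the distance inequalities of Lemmas~\ref{lem:tr2del}~\ref{item:tr2del:c}--\ref{item:tr2del:x} (resp. Lemma~\ref{lem:tr2add}~\ref{item:tr2add:c}--\ref{item:tr2add:x}), is precisely that these sets behave as expected, i.e. $D_X$ is preserved and the fringe in the new graft is $C_X \setminus C'$ (resp. $\hat{C}_X$); I would state this explicitly so that later sections can refer to ``$D_X$'' unambiguously after a fringe operation. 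I do not anticipate a genuine obstacle here---the theorem is a clean corollary---so the only real task is to present the case split cleanly and cite the right items; the ``hard part,'' such as it is, was already carried out in proving Lemmas~\ref{lem:tr2del} and \ref{lem:tr2add} via the path-weight analysis of Lemma~\ref{lem:path2trivial}.

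\begin{proof}
By definition, a fringe deletion yields $(G', T') = (G - C', T \setminus C')$ for some $C' \subseteq C_X$, and a fringe addition yields a graft of the form treated in Lemma~\ref{lem:tr2add}. In the former case, the hypotheses of Lemma~\ref{lem:tr2del} hold with this $C'$; in the latter, the hypotheses of Lemma~\ref{lem:tr2add} hold. We treat the two cases together, citing the appropriate lemma.

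Claim \ref{itme:fr:join} is Lemma~\ref{lem:tr2del}~\ref{item:tr2del:join} (deletion) or Lemma~\ref{lem:tr2add}~\ref{item:tr2add:join} (addition). In particular $F$ is a minimum join of $(G', T')$, so the $F$-distances below are well defined. Since $D_X$ is disjoint from $C_X$, every $y \in D_X$ lies in $V(G')$ in either case, and claim \ref{item:fr:d} is Lemma~\ref{lem:tr2del}~\ref{item:tr2del:d} or Lemma~\ref{lem:tr2add}~\ref{item:tr2add:d}. Finally, claim \ref{item:fr:ext} is Lemma~\ref{lem:tr2del}~\ref{item:tr2del:ext} or Lemma~\ref{lem:tr2add}~\ref{item:tr2add:ext}. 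Moreover, by Lemma~\ref{lem:tr2del}~\ref{item:tr2del:c}, \ref{item:tr2del:x} (resp. Lemma~\ref{lem:tr2add}~\ref{item:tr2add:c}, \ref{item:tr2add:x}) together with \ref{item:fr:d}, the sets $D_X$ and $C_X$ of $X$ in $(G', T')$ are $D_X$ and $C_X \setminus C'$ (resp. $D_X$ and $\hat{C}_X$); in particular the notation $D_X$ is unambiguous after a fringe operation. This completes the proof.
\end{proof}
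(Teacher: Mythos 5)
Your proposal is correct and takes essentially the same route as the paper, which states Theorem~\ref{thm:fr} as a direct summary of Lemmas~\ref{lem:tr2del} and \ref{lem:tr2add} without a separate proof; your case split and item-by-item citations are exactly the intended argument. The extra remark on the stability of $D_X$ and the fringe under the operation is a harmless (and arguably useful) addition.
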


\section{Reduction between Combs and General Bipartite Grafts}  \label{sec:reduction} 
\subsection{Decomposition}  \label{sec:reduction:decompose} 

In this section, we show that a general bipartite graft can be considered 
as a combination of comb and primal bipartite grafts.  
We prove that, for graft $(G, T)$ and minimum join $F$,  every maximal bipartitic extreme set $X$ is an $F$-combic set 
for which each member of $\conntodd{G-X}{T}$ induces a primal graft. 
We define a new operation called rootlization that   
adds some vertices and edges to $(G, T)$ and 
makes the entire graft, except some trivial parts, primal with respect to a new vertex. 
We then derive the desired property by applying Theorem~\ref{thm:sebo} to the rootlized graft.

\begin{definition} 
Let $(G, T)$ be a graft, and let $X \subseteq V(G)$. 
Let $\hat{G}$ be the graph such that $V(\hat{G}) = V(G) \cup \{r, s\}$, where $r, s\not\in V(G)$,  
and $E(\hat{G}) = E(G) \cup \{rs \} \cup \{ sx : x\in X\}$. 
Let $\hat{T} = T \cup \{r, s\}$. 
We call  the graft $(\hat{G}, \hat{T})$  the {\em rootlization} of $(G, T)$ 
by the {\em mount} $X$, the {\em root} $r$, and the {\em attachment} $s$. 
We also denote $(\hat{G}, \hat{T})$ by $\extend{G}{T}{X}{r}{s}$. 
\end{definition}

The next lemma shows a fundamental property of rootlization 
and is used for proving Lemma~\ref{lem:maxext2init} and Theorem~\ref{thm:decompose}.

\begin{lemma} \label{lem:extend} 
Let $(G, T)$ be a graft, and let $X \subseteq V(G)$. 
Let $(\hat{G}, \hat{T})$ be the rootlization of $(G, T)$ by the mount $X$, the root $r$, and the attachment $s$. 
\begin{rmenum} 
\item \label{item:extend:join} A set $\hat{F} \subseteq E(\hat{G})$ is a minimum join if and only if 
$\hat{F}$ is of the form $\hat{F} = F \cup \{rs\}$, where $F$ is a minimum join of $(G, T)$. 
\item \label{item:extend:distance} Let $F$ be a minimum join of $(G, T)$, and let  $\hat{F} = F \cup \{rs\}$. 
Then, $\distgtf{\hat{G}}{\hat{T}}{\hat{F}}{r}{y} = \min_{x\in X} \distgtf{G}{T}{F}{x}{y}$ holds for every $y\in V(G)$, 
whereas $\distgtf{\hat{G}}{\hat{T}}{\hat{F}}{r}{s} = -1$. 
\end{rmenum} 
\end{lemma}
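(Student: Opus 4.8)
The plan is to establish both statements by analyzing how joins and $F$-distances behave with respect to the single vertex $r$, which in $\hat G$ has the unique neighbour $s$, and $s$ in turn is adjacent precisely to $r$ and to the vertices of $X$. The key structural observation is that $\parcut{\hat G}{r} = \{rs\}$ and that $r, s\in \hat T$ while $\hat T \cap V(G) = T$; hence for any join $\hat F$ of $(\hat G, \hat T)$, parity at $r$ forces $rs\in \hat F$, and parity at $s$ then forces $|\delta_{\hat G}(s)\cap \hat F \cap E_{\hat G}[X,s]|$ to have the same parity as $|\{rs\}\cap \hat F| + |\{s\}\cap \hat T| = 1+1 = 0$, i.e.\ even; but more usefully, deleting $rs$ from $\hat F$ and restricting to $E(G)$ will be a join of $(G,T)$ once we check the parity condition at each $v\in V(G)$, which is unaffected since the only new edges incident to $v$ are those in $E_{\hat G}[X,s]$, and those are absent from $\hat F\setminus\{rs\}$ — wait, this is exactly the point to be careful about, so let me restructure.

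\smallskip
First I would prove \ref{item:extend:join}. For the ``if'' direction, let $F$ be a minimum join of $(G,T)$ and set $\hat F = F\cup\{rs\}$; one checks directly that $\hat F$ is a join of $(\hat G,\hat T)$ (parity at $r$ and $s$ is handled by the single edge $rs$, parity elsewhere is inherited from $F$), and minimality follows from Lemma~\ref{lem:minimumjoin} together with Lemma~\ref{lem:path2trivial}\ref{item:path2trivial:c}: any circuit of $\hat G$ using a vertex of $\{r,s\}\cup(V(\hat G)\setminus V(G))$ — here applied with the roles $X$ playing $X$, $V(G)\setminus X$ split appropriately, and the new vertices $\{r,s\}$ — has positive $F$-weight, while circuits inside $G$ have nonnegative $F$-weight because $F$ is minimum in $(G,T)$. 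Actually the cleanest route: $\hat G$ is obtained from $G$ by a fringe-addition-like operation (new vertices $r,s$ attached only to $X\cup\{r\}$), but $r,s\in\hat T$ so it is not literally a fringe addition; instead I would invoke Lemma~\ref{lem:path2trivial}\ref{item:path2trivial:c} with $Y = V(G)\setminus X$, $Z=\emptyset$, and $\hat Z = \{r,s\}$ to conclude every circuit of $\hat G$ meeting $\{r,s\}$ has positive weight, hence $\hat F$ is minimum. For the ``only if'' direction, let $\hat F$ be any minimum join of $(\hat G,\hat T)$. Parity at $r$ gives $rs\in\hat F$. No edge of $E_{\hat G}[X,s]$ lies in $\hat F$: if some $sx\in\hat F$ with $x\in X$, then $\hat F$ contains a path $P$ from $r$ through $s$ to $x$, and $\hat F\Delta E(P)$ is a join of $(\hat G,\hat T\Delta\{r,x\})$ of smaller cardinality unless $P$ has negative weight — more directly, $\hat F\setminus\{rs,sx\}$ plus the edge $rs$... hmm, I would instead argue: since $r$ has degree one, $\hat F\setminus\{rs\}$ is a join of $(\hat G - r - s + (\text{nothing}), T)$ only after also removing $s$; since $s\in\hat T$, removing $s$ together with $rs$ keeps $r$'s parity trivially satisfied and forces the remaining edges at $s$ to be even in number. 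Cleanest: let $F := \hat F \cap E(G)$; I claim $F$ is a join of $(G,T)$ and $\hat F = F\cup\{rs\}$. The edges of $\hat F$ incident to $r$ or $s$ are among $\{rs\}\cup E_{\hat G}[X,s]$; parity at $r$ forces $rs\in\hat F$, and then minimality (no join contains a ``wasteful'' sub-path) rules out any $sx\in\hat F$, for otherwise $\hat F\setminus\{sx\}$ would still satisfy parity at $s$ only if another $sx'\in\hat F$, and removing such a pair while routing through $rs$... this needs the minimality + Lemma~\ref{lem:path2trivial} argument to close. So $\hat F = F\cup\{rs\}$ with $F\subseteq E(G)$, parity at every $v\in V(G)$ is exactly the $(G,T)$-parity, so $F$ is a join of $(G,T)$, and $|F| = |\hat F|-1$ is minimized because any smaller join $F^*$ of $(G,T)$ would give the smaller join $F^*\cup\{rs\}$ of $(\hat G,\hat T)$.

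\smallskip
Then I would prove \ref{item:extend:distance}. Fix a minimum join $F$ of $(G,T)$ and $\hat F = F\cup\{rs\}$. For $y = s$: the only $\hat F$-shortest path from $r$ to $s$ is the single edge $rs\in\hat F$, giving $w_{\hat F}(\text{edge }rs) = |\{rs\}| - 2|\{rs\}\cap\hat F| = 1 - 2 = -1$, and no shorter path exists since any other $r$–$s$ path revisits $r$'s only edge; hence $\distgtf{\hat G}{\hat T}{\hat F}{r}{s} = -1$. For $y\in V(G)$: every path $\hat P$ in $\hat G$ from $r$ to $y$ must start with the edge $rs$, then leave $s$ via some edge $sx$ with $x\in X$ (it cannot return to $r$), and then continue within $G$ as a path from $x$ to $y$. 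Its $\hat F$-weight is $w_{\hat F}(rs) + w_{\hat F}(sx) + w_F(\text{tail}) = -1 + 1 + w_F(P_{x,y}) = w_F(P_{x,y})$, where $P_{x,y}$ is the part inside $G$ and we used $sx\notin\hat F$. Minimizing over all such $\hat P$ means minimizing over $x\in X$ and over $x$–$y$ paths $P_{x,y}$ in $G$, yielding $\min_{x\in X}\distgtf{G}{T}{F}{x}{y}$; conversely every $x$–$y$ path in $G$ with $x\in X$ extends to an $r$–$y$ path in $\hat G$ of the same $\hat F$-weight, so equality holds.

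\smallskip
The main obstacle is the ``only if'' half of \ref{item:extend:join}: ruling out that a minimum join $\hat F$ uses edges of $E_{\hat G}[X,s]$. The honest argument is that if $sx_1,\dots,sx_{2k}\in\hat F$ (an even, positive number, by parity at $s$ given $rs\in\hat F$ and $s\in\hat T$), then these together with paths inside $G$ and the matching structure of $\hat F$ form closed walks or paths that can be rerouted to produce a join of no larger size avoiding $E_{\hat G}[X,s]$ — formally, one shows any minimum join can be chosen with this property and then uses Lemma~\ref{lem:minimumjoin} (a join is minimum iff no negative-weight circuit) to transfer to the given $\hat F$; but since $\distgtf{}{}{}{}{}$-based uniqueness of the minimum-join *value* (not the join itself) is what \ref{item:extend:join} really asserts, I can sidestep this by arguing purely about cardinalities: $|\hat F| \ge$ (min join size of $(G,T)$) $+ 1$ always (since $\hat F\setminus(\{rs\}\cup E_{\hat G}[X,s])$ is a join of $(G,T)$ after adjusting — this requires checking $\hat F$ meets $\delta(s)\setminus\{rs\}$ in an even set, which it does), and this bound is achieved by $F\cup\{rs\}$, so every minimum $\hat F$ has exactly this size, forcing $\hat F\cap E_{\hat G}[X,s] = \emptyset$ and hence $\hat F = (\hat F\cap E(G))\cup\{rs\}$ with $\hat F\cap E(G)$ minimum in $(G,T)$. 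I would write up this cardinality version as the primary argument.
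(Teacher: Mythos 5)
Your ``if'' direction of \ref{item:extend:join} and your proof of \ref{item:extend:distance} are essentially the paper's argument: no circuit of $\hat G$ passes through $r$, every circuit through $s$ consists of two edges of $E_{\hat G}[s,X]$ (contributing $+2$) plus a path in $G$ between two vertices of $X$ (contributing at least $0$ because $X$ is extreme --- a hypothesis both you and the paper's proof use even though it is absent from the statement), and Lemma~\ref{lem:minimumjoin} then gives minimality; your computation of the distances in \ref{item:extend:distance} matches what the paper leaves implicit. Your appeal to Lemma~\ref{lem:path2trivial}~\ref{item:path2trivial:c} is not literally licensed (that lemma requires $\hat T = T$ and all new edges to join $X$ to new vertices, whereas here $r, s \in \hat T$ and $rs$ joins two new vertices), but since edge weights depend only on $F$ the direct computation goes through.

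The genuine gap is in the ``only if'' direction of \ref{item:extend:join}, exactly where you flag trouble. The cardinality argument you commit to rests on the claim that $\hat F \setminus (\{rs\} \cup E_{\hat G}[X,s])$ is a join of $(G,T)$, and the only parity you check is at $s$; but the parity that breaks is at the $X$-ends of the edges of $\hat F \cap E_{\hat G}[X,s]$. If $sx \in \hat F$ for some $x \in X$, then deleting $sx$ flips the parity of $\hat F$ at $x$, so the restriction to $E(G)$ is a join of $(G, T \Delta S)$, where $S$ is the set of such ends, not of $(G,T)$; and, writing $\nu(G,T)$ for the minimum size of a join, $\nu(G, T\Delta S)$ can in general be far smaller than $\nu(G,T)$ (take $G$ a long path, $T$ its two ends, $S = T$), so the bound $|\hat F| \ge \nu(G,T) + 1$ does not follow from what you wrote. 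Repairing it would require showing $\nu(G, T\Delta S) \ge \nu(G,T) - |S|$ using the extremeness of $X$ via Lemma~\ref{lem:sebo}, which you do not carry out. The paper closes this case differently and more cheaply: if a minimum join $\hat F'$ contained an edge of $E_{\hat G}[s,X]$, then the symmetric difference of $\hat F'$ with the already-certified minimum join $\hat F = F \cup \{rs\}$ would contain a circuit of $\hat F$-weight $0$ meeting $E_{\hat G}[s,X]$, whereas the circuit analysis above shows every such circuit has $\hat F$-weight at least $2$. You should adopt this symmetric-difference argument (or fully complete one of the rerouting arguments you abandoned) to close the proof.
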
 
\begin{proof}  %lem:extend 
Let $\hat{F} := F \cup \{ rs \}$, where $F$ is a minimum join of $(G, T)$.  
Obviously, $\hat{F}$ is a join of $(\hat{G}, \hat{T})$. 

\begin{pclaim} \label{claim:extend:circuit} 
No circuit in $\hat{G}$ contains $\{ rs \}$. 
No circuit in $\hat{G}$ with negative $\hat{F}$-weight contains edges from $E_{\hat{G}}[s, X]$. 
\end{pclaim} 
\begin{proof} 
The first statement obviously holds. 
For proving the second one, let $C$ be a circuit with $E(C) \cap E_{\hat{G}}[s, X] \neq \emptyset$. 
The first statement implies  $E(C) \subseteq E_{\hat{G}}[s, X] \cup E(G)$.  
Obviously, $w_{\hat{F}}( C. E_{\hat{G}}[s, X] ) > 0$.  
The graph $C. E(G)$ is a path between vertices in $X$. 
Because $X$ is extreme, $w_{F}(C. E(G)) \ge 0$. 
Hence, $w_{\hat{F}}(C) \ge 0$ follows, and the second statement is proved. 
\end{proof} 

We now prove the necessity of \ref{item:extend:join} by proving that $\hat{F}$ is a minimum join. 
Lemma~\ref{lem:minimumjoin} implies that $G$ has no circuit with negative $F$-weight. 
Claim~\ref{claim:extend:circuit} further implies that $\hat{G}$ has no circuit with negative $\hat{F}$-weight.  
Hence, by Lemma~\ref{lem:minimumjoin} again, $\hat{F}$ is a minimum join of $(\hat{G}, \hat{T})$. 
This proves the necessity of \ref{item:extend:join}.

We next prove the sufficiency of \ref{item:extend:join}. 
Suppose that the claim fails.  
Then, there is a minimum join $\hat{F}'$ that contains an edge from $E_{\hat{G}}[s, X]$. 
The subgraph of $G$ determined by $\hat{F} \Delta \hat{F}'$  contains a circuit $Q$ 
 with $\hat{F}$-weight $0$ that contains an edge from $E_{\hat{G}}[s, X]$. 
 This contradicts Claim~\ref{claim:extend:circuit}. 
Therefore, the sufficiency of \ref{item:extend:join} is proved. 

The statement \ref{item:extend:distance} can easily be confirmed  from \ref{item:extend:join}.

\end{proof}

Lemma~\ref{lem:extend} implies the next lemma, which connects rootlized graphs and Theorem~\ref{thm:sebo}.

\begin{lemma}  \label{lem:maxext2init}
Let $(G, T)$ be a bipartite graft with color classes $A$ and $B$, and let  $F$ be a minimum join of $G$. 
Let $X \subseteq V(G)$ be a maximal bipartitic extreme set. 
Let $(\hat{G}, \hat{T}):= \extend{G}{T}{X}{r}{s}$.   
Let $\hat{F} := F \cup \{rs\}$, $\hat{X} := X \cup \{r\}$,  and let $\hat{D}_X:= D_X \cup \{s\}$.  
Then, $\hat{G}[\hat{X} \cup \hat{D}_X]$ is the initial component of $r$, for which  
 $\agtr{\hat{G}}{\hat{T}}{r} = \hat{X}$ and $\dgtr{\hat{G}}{\hat{T}}{r} = \hat{D}_X$. 
\end{lemma}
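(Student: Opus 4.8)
The plan is to deduce everything from Lemma~\ref{lem:extend} together with the definition of the initial component. By Lemma~\ref{lem:extend}~\ref{item:extend:join}, $\hat{F} = F \cup \{rs\}$ is a minimum join of $(\hat{G}, \hat{T})$, so the $\hat{F}$-distances computed from $r$ are the ones governing the structure at $r$. By Lemma~\ref{lem:extend}~\ref{item:extend:distance}, for every $y \in V(G)$ we have $\distgtf{\hat{G}}{\hat{T}}{\hat{F}}{r}{y} = \min_{x\in X} \distgtf{G}{T}{F}{x}{y}$, and $\distgtf{\hat{G}}{\hat{T}}{\hat{F}}{r}{s} = -1$. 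First I would read off $\levelr{0}{r}$ and $\layr{0}{r}$ from these values: the vertex $s$ has distance $-1 < 0$; each $y \in D_X$ has $\min_{x\in X}\distgtf{G}{T}{F}{x}{y} < 0$ by definition of $D_X$, hence distance $< 0$; each $x \in X$ has distance $0$ since $X$ is extreme and $x$ is one of the minimizing vertices (so the min is $\le 0$, and extremeness gives $\ge 0$); and each $y \in C_X$ has $\min_{x\in X}\distgtf{G}{T}{F}{x}{y} > 0$ — this last point uses Lemma~\ref{lem:tr2del}~\ref{item:tr2del:c} (with $C' = \emptyset$), or equivalently Lemma~\ref{lem:extreme2trivial} together with the definition of $C_X$. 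Therefore $\levelr{0}{r} = \hat{X} = X \cup \{r\}$ and $\layr{0}{r} = \hat{D}_X = D_X \cup \{s\}$, so $\layler{0}{r} = \hat{X} \cup \hat{D}_X$ as sets.

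Next I would check that $\hat{G}[\hat{X}\cup\hat{D}_X]$ is in fact connected and therefore equals the initial component $\init{r}$, rather than merely containing it. The initial component of $r$ is the member of $\conn{\hat{G}[\layler{0}{r}]}$ containing $r$; since $\layler{0}{r} = \hat{X}\cup\hat{D}_X$, it suffices to show $r$ is connected within $\hat{G}[\hat{X}\cup\hat{D}_X]$ to every other vertex of that set. The vertex $r$ is adjacent to $s$, and $s$ is adjacent to every vertex of $X$, so $\hat{X}\cup\{s\}$ lies in one component. For a vertex $y \in D_X$, by definition there is $x \in X$ with $\distgtf{G}{T}{F}{x}{y} < 0$; an $F$-shortest $x$–$y$ path $P$ in $G$ then has $w_F(P) < 0$, and every vertex $z$ on $P$ satisfies $\distgtf{G}{T}{F}{x}{z} \le w_F(P[x,z])$ — I would invoke the standard fact that along an $F$-shortest path the prefix distances are the correct distances (this is implicit in Seb\"o's framework and is the monotonicity used throughout), so every vertex of $P$ lies in $X \cup D_X$, whence $P$ is a path inside $\hat{G}[\hat{X}\cup\hat{D}_X]$ joining $y$ to $X$. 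Thus every vertex of $\hat{X}\cup\hat{D}_X$ is connected to $r$, and $\init{r} = \hat{G}[\hat{X}\cup\hat{D}_X]$.

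Finally, with $\init{r} = \hat{G}[\hat{X}\cup\hat{D}_X]$ established, the identities $\agtr{\hat{G}}{\hat{T}}{r} = V(\init{r})\cap\levelr{0}{r} = (\hat{X}\cup\hat{D}_X)\cap\hat{X} = \hat{X}$ and $\dgtr{\hat{G}}{\hat{T}}{r} = V(\init{r})\setminus\agtr{\hat{G}}{\hat{T}}{r} = \hat{D}_X$ are immediate from the definitions of $\agtr{\hat{G}}{\hat{T}}{r}$ and $\dgtr{\hat{G}}{\hat{T}}{r}$. The main obstacle I anticipate is the connectivity step: one must be careful that an $F$-shortest path from a minimizing $x \in X$ to a vertex $y \in D_X$ stays inside $X \cup D_X$ and does not wander into $C_X$, and that it does not pass through a vertex of $X$ other than its endpoint in a way that would disconnect the argument — but since such detours only need vertices with nonpositive prefix distance, and $C_X$ has strictly positive distance from $X$ while $X$ itself is allowed, the path is confined to $\hat{X}\cup\hat{D}_X$ as needed. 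Everything else is bookkeeping with the distance formula from Lemma~\ref{lem:extend}.
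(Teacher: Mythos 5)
Your proposal takes essentially the same route as the paper: identify $\levelr{0}{r}$ and $\layr{0}{r}$ via Lemma~\ref{lem:extend}~\ref{item:extend:distance}, then check that $\hat{G}[\hat{X}\cup\hat{D}_X]$ is connected. The first part is carried out correctly (including the observation that $\min_{x\in X}\distgtf{G}{T}{F}{x}{y}>0$ for $y\in C_X$, which the paper leaves implicit), and the paper itself disposes of connectivity with the single word ``obviously.''

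The one step I would push back on is your justification of connectivity. You assert that along an $F$-shortest $x$--$y$ path with $y\in D_X$ every prefix weight is nonpositive, and conclude that the path stays in $X\cup D_X$. That assertion is not justified as stated: for an arbitrary $x\in X$ with $\distgtf{G}{T}{F}{x}{y}<0$ (not necessarily attaining the minimum over $X$), a prefix $w_F(xPz)=\distgtf{G}{T}{F}{x}{z}$ can be strictly positive, and even then $z$ need not lie in $X\cup D_X$; your closing remark that ``such detours only need vertices with nonpositive prefix distance'' is circular. The clean repair does not need shortest paths at all: by Lemma~\ref{lem:extreme2trivial}~\ref{item:extreme2trivial:noedge}, $E_G[D_X,C_X]=\emptyset$, so for any connected component $K$ of $G[D_X]$ we have $\parNei{G}{V(K)}\subseteq X$; if this neighborhood were empty, $K$ would be a union of components of $G$ and no vertex of $K$ could have finite (let alone negative) $F$-distance from $X$, a contradiction. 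Hence every component of $G[D_X]$ sends an edge to $X$, and together with $r$--$s$ and the edges from $s$ to all of $X$ this makes $\hat{G}[\hat{X}\cup\hat{D}_X]$ connected. (Alternatively, your path argument can be salvaged by taking $x$ to attain $\min_{x'\in X}\distgtf{G}{T}{F}{x'}{y}$ and ruling out $C_X$-segments by a weight count, but that is more work than the lemma deserves.) With that step repaired, the rest of your proof is correct and the final bookkeeping for $\agtr{\hat{G}}{\hat{T}}{r}$ and $\dgtr{\hat{G}}{\hat{T}}{r}$ is exactly as in the paper.
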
 
\begin{proof} 
Lemma~\ref{lem:extend} \ref{item:extend:distance} implies $\hat{X} = \levelr{0}{r}$ and $\hat{D}_X = \layr{0}{r}$. 
Additionally, $\hat{G}[\hat{X}\cup \hat{D}_X]$ is obviously connected. 
Accordingly, the claim follows. 
\end{proof}

From Theorem~\ref{thm:sebo} and Lemmas~\ref{lem:extend} and \ref{lem:maxext2init},  
we obtain Theorem~\ref{thm:decompose}.

\begin{theorem} \label{thm:decompose} 
Let $(G, T)$ be a bipartite graft with color classes $A$ and $B$.  Let $F$ be a minimum join. 
Let $X \subseteq A$ be a maximal bipartitic extreme set. 
Then,  $X$ is an $F$-combic with the following properties:  
\begin{rmenum} 
\item \label{item:decompose:odd} $\conn{ G[ D_X ] }  = \conntodd{G - X}{T} $. 
\item \label{item:decompose:comp} For every $C \in \conntodd{G - X}{T}$ and every $r_C \in V(C)$ 
that is the end of the edge from $\parcut{G}{C}\cap F$,   
the graft $(C, ( T\cap V(C) ) \Delta \{r_C\})$ is primal with respect to $r_C$, and 
$\parNei{G}{X}\cap V(C) \subseteq \agtr{C}{( T\cap V(C) ) \Delta \{r_C\}}{r_C}$ holds. 
\item \label{item:decompose:even} Every member of $\connteven{G - X}{T}$ is trivial factor-component. 
\end{rmenum} 
\end{theorem}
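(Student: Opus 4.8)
The strategy is to deduce Theorem~\ref{thm:decompose} directly from Seb\"o's distance theorem (Theorem~\ref{thm:sebo}) applied to the rootlized graft, using Lemma~\ref{lem:maxext2init} to identify the relevant sets. First I would set up the rootlization: let $(\hat G,\hat T):=\extend{G}{T}{X}{r}{s}$, $\hat F:=F\cup\{rs\}$, which is a minimum join of $(\hat G,\hat T)$ by Lemma~\ref{lem:extend}\ref{item:extend:join}. By Lemma~\ref{lem:maxext2init}, $\agtr{\hat G}{\hat T}{r}=\hat X=X\cup\{r\}$ and $\dgtr{\hat G}{\hat T}{r}=\hat D_X=D_X\cup\{s\}$. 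Theorem~\ref{thm:sebo} then tells us that $\hat X$ is $\hat F$-combic in $(\hat G,\hat T)$ with $\conntodd{\hat G-\hat X}{\hat T}=\conn{\hat G[D_X\cup\{s\}]}$ and $\connteven{\hat G-\hat X}{\hat T}=\conn{\hat G[C(r)]}$. The key observation is that $\hat G-\hat X=\hat G-X-\{r\}$ has the same components touching $D_X$ as $G-X$ does (the vertex $s$ is isolated from $V(G)$ once $X$ and $r$ are removed, since all $s$-edges go to $X\cup\{r\}$), and $s$ forms its own odd component $\{s\}$ since $s\in\hat T$. Stripping off this spurious component $\{s\}$ should transfer the combic property from $\hat X$ in $(\hat G,\hat T)$ back to $X$ in $(G,T)$: indeed the defining conditions (i)--(iii) of $F$-combic for $X$ in $(G,T)$ refer only to $E_G[X]$, to components of $G-X$, and to their cuts in $G$, all of which are unchanged by the rootlization except for the extra component $\{s\}$, whose cut edge is exactly $rs\in\hat F$.

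From there, item~\ref{item:decompose:odd} follows because $\conntodd{G-X}{T}=\conntodd{\hat G-\hat X}{\hat T}\setminus\{\{s\}\}=\conn{\hat G[D_X\cup\{s\}]}\setminus\{\{s\}\}=\conn{G[D_X]}$, the last equality because $s$ is not adjacent to $D_X$ in $\hat G$ (its only neighbors are in $X\cup\{r\}$). Item~\ref{item:decompose:even}: the even components of $G-X$ are the even components of $\hat G-\hat X$ other than any involving $s$, i.e.\ the components of $G[C_X]$; by Theorem~\ref{thm:sebo} applied to $\hat G$ together with Lemma~\ref{lem:combic2min}\ref{item:combic2min:ec}, $F\cap E(C)$ is a minimum join of $(C,T\cap V(C))$ for each such $C$, but we want more, namely that each such $C$ is a \emph{trivial} factor-component. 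For this I would invoke Lemma~\ref{lem:extreme2trivial}\ref{item:extreme2trivial:c2trivial}: since $X$ is a maximal bipartitic extreme set, every vertex of $C_X$ is a trivial vertex of $B$, and a trivial vertex is by definition the sole vertex of a trivial factor-component; hence every member of $\connteven{G-X}{T}$, being a single trivial vertex, is a trivial factor-component.

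For item~\ref{item:decompose:comp}, fix $C\in\conntodd{G-X}{T}$ and let $r_C\in V(C)$ be the end of the edge in $\parcut{G}{C}\cap F$; note $\parcut{G}{C}=\parcut{\hat G}{C}$ since $C\subseteq V(G)$ and $C\ne\{s\}$. Theorem~\ref{thm:sebo}(iii) applied to $(\hat G,\hat T)$ says $(C,(T\cap V(C))\Delta\{r_C\})$ is $r_C$-primal, and Theorem~\ref{thm:sebo}(iv) gives $\parNei{\hat G}{\hat X}\cap V(C)\subseteq\agtr{C}{(T\cap V(C))\Delta\{r_C\}}{r_C}$; since $\parNei{\hat G}{\hat X}\cap V(C)=\parNei{G}{X}\cap V(C)$ (the only new edges at $\hat X$ involve $s$ or $r$, which are not in $V(C)$, and the neighbors of $r$ other than $s$ are in $X$), this is exactly the desired inclusion. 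The main technical obstacle, which I expect to occupy the bulk of the written proof, is the bookkeeping in the preceding paragraph: carefully verifying that removing the artificial component $\{s\}$ and the two artificial vertices $r,s$ really does preserve \emph{all three} combic conditions and all the component/cut/distance data between $(\hat G,\hat T)$ and $(G,T)$ — in particular checking that no component of $G-X$ gets merged with $\{s\}$ in $\hat G-\hat X$ and that the $F$-distances inside each $C$ are literally the same in $C$ viewed inside $G$ or inside $\hat G$. None of these steps is deep, but each requires a line of justification, and the cleanest presentation is probably to state a short claim that $\conntodd{\hat G-\hat X}{\hat T}=\conntodd{G-X}{T}\cup\{\{s\}\}$ and $\connteven{\hat G-\hat X}{\hat T}=\connteven{G-X}{T}$ and prove it once, then read off \ref{item:decompose:odd}--\ref{item:decompose:comp} from Theorem~\ref{thm:sebo}, finishing \ref{item:decompose:even} with Lemma~\ref{lem:extreme2trivial}.
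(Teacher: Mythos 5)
Your proposal is correct and follows essentially the same route as the paper: rootlize by the mount $X$, use Lemma~\ref{lem:maxext2init} to identify $\agtr{\hat{G}}{\hat{T}}{r}=X\cup\{r\}$ and $\dgtr{\hat{G}}{\hat{T}}{r}=D_X\cup\{s\}$, apply Theorem~\ref{thm:sebo} to the rootlized graft, strip off the spurious odd component $\{s\}$, and finish item (iii) via Lemma~\ref{lem:extreme2trivial}. The only difference is that you spell out the bookkeeping (that $\conntodd{\hat{G}-\hat{X}}{\hat{T}}=\conntodd{G-X}{T}\cup\{G[s]\}$ and that cuts and neighborhoods are unchanged) somewhat more explicitly than the paper, which simply declares these identities obvious.
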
  
\begin{proof} %thm:decompose 
Let $(\hat{G}, \hat{T}) := \extend{G}{T}{X}{r}{s}$.   
Let $\hat{F} := F \cup \{rs\}$; Lemma~\ref{lem:extend} implies that $\hat{F}$ is a minimum join of $(\hat{G}, \hat{T})$.   
Let $\hat{X} := X \cup \{r\}$, and let $\hat{D}_X := D_X \cup \{s\}$. Let $\hat{C}_X := V(\hat{G})\setminus \hat{X} \setminus \hat{D}_X = C_X$.  
According to Lemma~\ref{lem:maxext2init}, 
the initial component of $r$ is $\hat{G}[\hat{X} \cup \hat{D}_X]$
for which $\hat{X} = \ar{r}$ and $\hat{D}_X = \dr{r}$.

Hence, applying Theorem~\ref{thm:sebo} to $(\hat{G}, \hat{T})$ with root $r$,   
we obtain that $\hat{X}$ is a $\hat{F}$-combic set that satisfies the following: 
\begin{engenum} 
\item \label{item:d2odd} $\conntodd{\hat{G} - \hat{X} }{\hat{T}} =  \conn{ \hat{G}[ \hat{D}_X ] }$. 
\item \label{item:comp2nonpositive} For every $K \in \conntodd{\hat{G} - \hat{X} }{\hat{T}}$ 
and every vertex $r_K \in V(K)$ that is the end of the edge from $\parcut{\hat{G}}{K} \cap \hat{F}$,  
the graft $(K, (V(K) \cap T) \Delta \{r_K\})$ is primal with respect to $r_K$, and 
$\parNei{\hat{G}}{\hat{X}} \cap V(K)\subseteq \agtr{K}{(V(K) \cap T) \Delta \{r_K\}}{r_K}$ holds.

\end{engenum}

It is obvious that $\conntodd{\hat{G}-\hat{X}}{\hat{T}} = \conntodd{G- X}{T} \cup \{ G[s]\}$ and $\conn{\hat{G}[ \hat{D}_X] } = \conn{ G[D_X] } \cup \{ G[s] \}$ hold. 
Hence, \ref{item:d2odd} and \ref{item:comp2nonpositive} prove \ref{item:decompose:odd} and \ref{item:decompose:comp}.

From \ref{item:decompose:odd}, we have $\connteven{G-X}{T} = \conn{G[ C_X]}$.   
Therefore, from Lemma~\ref{lem:extreme2trivial} \ref{item:extreme2trivial:c2trivial}, we obtain \ref{item:decompose:even}. 
This completes the proof. 
\end{proof}

\subsection{Construction} \label{sec:reduction:construct} 

In this section, we show the converse of Theorem~\ref{thm:decompose}. 
We define an operation called synthesis, 
which construct a bipartite graft from given comb and set of primal bipartite grafts, 
and prove that the spine set of the  comb  
forms a combic set in the obtained bipartite graft.

\begin{definition} 
Let $(G, T)$ be a comb with spine set $A$ and tooth set $B$, 
and let $\{ (G_v, T_v) : v\in B\}$ be a set of mutually disjoint grafts that are also disjoint from $(G, T)$.   
For each $v \in B$,  
 $(G_v, T_v)$ is a bipartite graft with color classes $A_v$ and $B_v$ that is primal with respect to $r_v \in A_v$. 
Let $(\hat{G}, \hat{T})$ be a bipartite graft that satisfies the following: 
\begin{rmenum}  
\item For every $v\in B$, $G_v$ is a subgraph of $\hat{G}$ with $E_{\hat{G}}[G_v] = E(G_v)$ and $\hat{T}\cap V(G_v) = T_v \Delta \{r_v\}$; 
\item $(G, T) = (\hat{G}, \hat{T})/ \{ G_v : v\in B \}$, where each $G_v$ is contracted to $v$; and, 
\item $\parcut{\hat{G}}{G_v} = E_{\hat{G}}[A, \agtr{G_v}{T_v}{r_v}]$. 
\end{rmenum}

We call $(\hat{G}, \hat{T})$ a {\em synthesis} of $(G, T; A, B)$ and $\{(G_v, T_v; r_v) : v\in B\}$.  
Here $(G, T; A, B)$ is called a {\em skeleton comb},   
and $(G_v, T_v)$ is called a {\em tooth graft} associated with $v$ for each $v\in B$.  
\end{definition} 

\begin{remark} 
Note that the synthesis $(\hat{G}, \hat{T})$ is a bipartite graft 
with color classes $A \cup \bigcup_{v \in B} B_v$ and $B \cup \bigcup_{v\in B} A_v$.  
We always assume these to be two color classes of a synthesis unless stated otherwise. 
\end{remark}

The next lemma is used for proving Theorem~\ref{thm:construct}.

\begin{lemma} \label{lem:nonregpath}  
Let $(G, T)$ be a bipartite graft that is primal with respect to $r\in V(G)$. 
Then, $\agtr{G}{T}{r}$ is an extreme set. 
\end{lemma}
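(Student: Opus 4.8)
\textit{The plan is to prove the lemma by induction on $|V(G)|$, using Seb\"o's structure theorem (Theorem~\ref{thm:sebo}).} Fix a minimum join $F$ of $(G,T)$. I have to show that $\distgtf{G}{T}{F}{x}{y}\ge 0$ for all $x,y\in\agtr{G}{T}{r}$; equivalently, that every path of $G$ joining two vertices of $\agtr{G}{T}{r}$ has nonnegative $F$-weight. Since $(G,T)$ is primal with respect to $r$, the graph $G$ is the initial component $\init{r}$, so $\agtr{G}{T}{r}=\levelr{0}{r}$, $\dgtr{G}{T}{r}=\layr{0}{r}$, and $\ccr{r}=\emptyset$. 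Applying Theorem~\ref{thm:sebo} to $(G,T)$ with root $r$ then yields: $\agtr{G}{T}{r}$ is $F$-combic; $\connteven{G-\agtr{G}{T}{r}}{T}=\emptyset$ and $\conntodd{G-\agtr{G}{T}{r}}{T}=\conn{G[\dgtr{G}{T}{r}]}$; and for each $C$ in this family, writing $e_C$ for the unique edge of $\parcut{G}{C}\cap F$ and $r_C\in V(C)$ for its end in $C$, the graft $(C,(T\cap V(C))\Delta\{r_C\})$ is $r_C$-primal and $\parNei{G}{\agtr{G}{T}{r}}\cap V(C)\subseteq\agtr{C}{(T\cap V(C))\Delta\{r_C\}}{r_C}$.

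Next I would feed the components into the induction. Fix $C\in\conntodd{G-\agtr{G}{T}{r}}{T}$; since $r\in\agtr{G}{T}{r}$ and $V(C)\cap\agtr{G}{T}{r}=\emptyset$, we have $|V(C)|<|V(G)|$, so the inductive hypothesis applies to the $r_C$-primal bipartite graft $(C,(T\cap V(C))\Delta\{r_C\})$ and shows that $\agtr{C}{(T\cap V(C))\Delta\{r_C\}}{r_C}$ is extreme in it. By Lemma~\ref{lem:combic2min} \ref{item:combic2min:oc}, $F\cap E(C)$ is a minimum join of that graft; hence every path of $C$ between two vertices of $\agtr{C}{(T\cap V(C))\Delta\{r_C\}}{r_C}$ has nonnegative $F$-weight (its $F$-weight equals its $(F\cap E(C))$-weight, since all its edges lie in $E(C)$).

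Now take $x,y\in\agtr{G}{T}{r}$ and an arbitrary path $R$ of $G$ between $x$ and $y$; the goal is $w_F(R)\ge 0$. Because $\agtr{G}{T}{r}$ is $F$-combic and $G-\agtr{G}{T}{r}$ consists exactly of the components $C$ above, $E(G)$ is the disjoint union of $E_G[\agtr{G}{T}{r}]$, the edge sets $E(C)$, and the cuts $\parcut{G}{C}$, and no edge runs between two distinct such components. I would partition $E(R)$ accordingly and bound each part below. Edges of $R$ in $E_G[\agtr{G}{T}{r}]$ contribute $+1$ each, since $E_G[\agtr{G}{T}{r}]\cap F=\emptyset$. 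For each $C$, the edges of $R$ in $E(C)$ split into the maximal subpaths of $R$ lying in $V(C)$; each such subpath is entered and exited by $R$ across $\parcut{G}{C}$, so its two ends lie in $\parNei{G}{\agtr{G}{T}{r}}\cap V(C)\subseteq\agtr{C}{(T\cap V(C))\Delta\{r_C\}}{r_C}$, whence it has nonnegative $F$-weight by the previous paragraph. Finally, for each $C$ the path $R$ uses an even number of edges of $\parcut{G}{C}$, at most one of which ($e_C$) lies in $F$, so these crossing edges contribute nonnegative $F$-weight. Summing the three parts gives $w_F(R)\ge 0$, so $\agtr{G}{T}{r}$ is extreme; the base case $|V(G)|=1$ is trivial, as then $\agtr{G}{T}{r}=\{r\}$.

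The point I expect to be the real difficulty is explaining why an induction is needed at all: the tempting move of shifting $F$ along an $F$-shortest $r$--$x$ path (Lemmas~\ref{lem:sebo} and \ref{lem:tower2primal}) to promote $x$ to the root merely transports the claim into a verbatim copy of itself on a graft of the same size, so it never terminates. Progress comes only from unfolding the combic structure and recursing into the strictly smaller component grafts, and the two features that make this go through are the containment $\parNei{G}{\agtr{G}{T}{r}}\cap V(C)\subseteq\agtr{C}{(T\cap V(C))\Delta\{r_C\}}{r_C}$ from Theorem~\ref{thm:sebo} (so the inductive hypothesis genuinely bounds the excursions of $R$ into $C$) and the fact that each cut $\parcut{G}{C}$ meets $F$ in a single edge (so the crossing edges cannot push $w_F(R)$ negative).
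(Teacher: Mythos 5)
Your proof is correct and follows essentially the same route as the paper: both decompose via Theorem~\ref{thm:sebo}, recurse into the components of $G[\dgtr{G}{T}{r}]$ using the containment $\parNei{G}{\agtr{G}{T}{r}}\cap V(C)\subseteq\agtr{C}{(T\cap V(C))\Delta\{r_C\}}{r_C}$, and account for the cut edges by the even-crossing/single-$F$-edge argument. The only (immaterial) difference is the induction parameter: you induct on $|V(G)|$, while the paper inducts on $l=\min_{x}\distgtf{G}{T}{F}{r}{x}$, which increases to $l+1$ in each component.
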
 
\begin{proof} %lem:nonregpath
Let $F$ be a minimum join of $(G, T)$.  
Let $l := \min_{ x \in V(G) } \distgtf{G}{T}{F}{r}{x} $. 
If $l = 0$, then $V(G) = \{r\}$, and the statement trivially holds. 
We prove the case $l < 0$ by induction on $l$. 

Next, let $l \le -1$, and assume that the statement holds for every case where $l$ is smaller.  
Let $x, y \in \agtr{G}{T}{r}$, and let $Q$ be an $F$-shortest path between $x$ and $y$. 
Because $\agtr{G}{T}{r}$ is stable, we have $E(Q) \subseteq \bigcup \{ \parcut{G}{K} \cup E(K) : K \in \conn{ G[ \dgtr{G}{T}{r} ]} \}$.

Let $K\in \conn{ G[\dgtr{G}{T}{r}] }$. 
The set $E(Q)\cap \parcut{G}{K}$ has an even number of edges, 
only one of which can be from $F$, according to Theorem~\ref{thm:sebo}. 
Hence, we have $w_F( Q. \parcut{G}{K} ) \ge 0$.

\begin{pclaim}  \label{claim:nonregpath:seg} 
Let $u, v \in V(K) \cap \parNei{G}{ \agtr{G}{T}{r} }$.  If $P$ is a path of $K$ between $u$ and $v$, 
then $w_F(P) \ge 0$ holds. 
\end{pclaim} 
\begin{proof} 
Now, let $r_K \in V(K)$ be the end of the edge from $\parcut{G}{K} \cap F$.  
According to Theorem~\ref{thm:sebo},  
$F \cap E(K)$ is a minimum join of the graft $(K, (T\cap V(K))\Delta \{ r_K \})$, 
  the graft $(K, (T\cap V(K))\Delta \{ r_K \})$ is primal with respect to $r_K$, 
and $\min_{x\in V(K)} \distgtf{K}{(T\cap V(K))\Delta \{ r_K \}}{F\cap E(K)}{r_K}{x} = l+1$.  
Hence, the induction hypothesis implies 
that $\agtr{K}{(T\cap V(K))\Delta \{ r_K \}}{r_K}$ is an extreme set of $(K, (T\cap V(K))\Delta \{ r_K \})$. 
In addition, 
Theorem~\ref{thm:sebo} also implies  $V(K)\cap \parNei{G}{\agtr{G}{T}{r}} \subseteq \agtr{K}{(T\cap V(K))\Delta \{ r_K \}}{r_K}$. 
This proves the claim. 
\end{proof}

Claim~\ref{claim:nonregpath:seg} implies that 
each connected component of $Q. E(K)$ is a path whose $F$-weight is no less than $0$. 
Hence, we have $w_F(Q) \ge 0$. 
This proves the lemma. 
\end{proof}

We now prove Theorem~\ref{thm:construct}.

\begin{theorem} \label{thm:construct} 
Let $(\hat{G}, \hat{T})$ be a synthesis of a skeleton comb $(G, T; A, B)$ and a set of tooth grafts $\{ (G_v, T_v; r_v): v\in B\}$.  
Let $F$ be a minimum join of $(G, T)$.   
For each $v\in B$, let $f_v\in E(\hat{G})$ be the edge of $\hat{G}$ that corresponds to the edge from $\parcut{G}{v} \cap F$, 
and let $r_v' \in V(G_v)$ be the end of $f_v$. 
Let $F_v$ be a minimum join of $(G_v, T_v)$ if $r_v' = r_v$; 
otherwise, let $F_v$ be a minimum join of $(G_v, T_v \Delta \{r_v, r_v'\})$.     

Then, 
\begin{rmenum} 
\item  $F \cup \bigcup_{v\in B} F_v$ is a minimum join of $(\hat{G}, \hat{T})$, and
\item  $A$ is a maximal bipartitic extreme set of $(\hat{G}, \hat{T})$ with $C_A = \emptyset$.  
\end{rmenum} 

\end{theorem}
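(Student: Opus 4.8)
The plan is to verify the two claims by analyzing the structure of the synthesis $(\hat G,\hat T)$ directly, using the rootlization machinery and Theorem~\ref{thm:decompose} as the guiding template, since $A$ should play the role of the maximal bipartitic extreme set whose skeleton is the comb $(G,T;A,B)$ and whose odd components are the tooth grafts. First I would establish that $\hat F := F \cup \bigcup_{v\in B} F_v$ is a join of $(\hat G,\hat T)$: this is a routine parity check, using that $\hat T \cap V(G_v) = T_v \Delta \{r_v\}$ and that $F_v$ is a join of $(G_v, T_v)$ or $(G_v, T_v\Delta\{r_v,r_v'\})$ according to whether the edge $f_v$ meeting $V(G_v)$ from $F$ is incident to $r_v$ or not, while $F\setminus\bigcup_v E(G_v)$ restricted to the comb behaves as the join $F$ after contraction. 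To get minimality I would invoke Lemma~\ref{lem:minimumjoin}: it suffices to show $\hat G$ has no circuit of negative $\hat F$-weight. Every circuit $Q$ of $\hat G$ decomposes along the cuts $\parcut{\hat G}{G_v}$ into a part lying in the comb (after contraction, a circuit or union of paths through the $[G_v]$, which has nonnegative weight because $F$ is a minimum join of $(G,T)$ and $A$-to-$A$ paths in a comb are nonnegative) and, for each $v$, a collection of segments inside $G_v$ joining two vertices of $\parNei{\hat G}{A}\cap V(G_v)$. Since the synthesis condition forces $\parcut{\hat G}{G_v} = E_{\hat G}[A, \agtr{G_v}{T_v}{r_v}]$, those segments join vertices of $\agtr{G_v}{T_v}{r_v}$, and by Lemma~\ref{lem:nonregpath} (applied to $(G_v,T_v)$ if $r_v'=r_v$, or via Lemma~\ref{lem:tower2primal} to the isomorphic primal graft $(G_v,T_v\Delta\{r_v,r_v'\})$ otherwise, whose $A$-set coincides with $\agtr{G_v}{T_v}{r_v}$) the set $\agtr{G_v}{T_v}{r_v}$ is extreme, so each such segment has nonnegative $F_v$-weight. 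Summing gives $w_{\hat F}(Q)\ge 0$, proving (i).

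For (ii), I would show $A$ is extreme, then that it is maximal, and that $C_A=\emptyset$. Extremeness of $A$: take $x,y\in A$ and an $\hat F$-shortest path $Q$ between them; by the same cut decomposition used above, $Q$ splits into a comb part (nonnegative, as $A$-to-$A$ in the comb) and segments inside the $G_v$'s between vertices of $\agtr{G_v}{T_v}{r_v}$, each nonnegative by the extremeness from Lemma~\ref{lem:nonregpath}; hence $\distgtf{\hat G}{\hat T}{\hat F}{x}{y}\ge 0$. Next, $C_A=\emptyset$ and maximality. I claim $D_A = V(\hat G)\setminus A$, i.e.\ every vertex $y\notin A$ satisfies $\min_{x\in A}\distgtf{\hat G}{\hat T}{\hat F}{x}{y}<0$. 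For $y\in V(G_v)$, the key is to identify $\min_{x\in A}\distgtf{\hat G}{\hat T}{\hat F}{x}{y}$ with the distance inside the primal tooth graft measured from $r_v$ (shifted): an $\hat F$-shortest walk from $A$ to $y$ enters $G_v$ through an edge of $\parcut{\hat G}{G_v}\cap\hat F$ at $r_v'$ (or through a zero-weight combic detour to such an edge), and from there the distance to $y$ inside $(G_v,T_v\Delta\{r_v,r_v'\})$ — whose underlying primal structure has $\agtr{}{}{r_v}$ unchanged by Lemma~\ref{lem:tower2primal} — is at most $0$, with strict negativity unless $y=r_v'$; and the entry edge itself contributes $-1$. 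Combining, $\min_{x\in A}\distgtf{\hat G}{\hat T}{\hat F}{x}{y}<0$ for all $y\in V(G_v)$. Since $B\cup\bigcup_v A_v = B\cup\bigcup_v V(G_v)$ exhausts $V(\hat G)\setminus A$, we get $D_A=V(\hat G)\setminus A$ and hence $C_A=\emptyset$. Maximality then follows: a strictly larger bipartitic extreme set would have to contain some $y\in D_A$, but any such $y$ has a neighbor $x'\in A$ (via the comb edges or the cut $\parcut{\hat G}{G_v}$) with $\distgtf{\hat G}{\hat T}{\hat F}{y}{x'}=-1<0$ if the larger set is on the $A$-side, contradicting extremeness; being on the $B$-side is impossible since $A\subseteq$ one color class.

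I would carry out the steps in the order: (1) $\hat F$ is a join; (2) cut decomposition lemma for circuits and paths in $\hat G$ relative to $\{\parcut{\hat G}{G_v}\}$; (3) $\hat F$ minimal via Lemma~\ref{lem:minimumjoin}; (4) $A$ extreme; (5) computation of $\min_{x\in A}\distgtf{\hat G}{\hat T}{\hat F}{x}{y}$ for $y$ inside each $G_v$, giving $D_A = V(\hat G)\setminus A$ and $C_A=\emptyset$; (6) maximality. The main obstacle I anticipate is step (5): correctly handling the two cases $r_v'=r_v$ versus $r_v'\ne r_v$ and showing that in the latter the tooth graft $(G_v,T_v\Delta\{r_v,r_v'\})$ is still primal with $\agtr{}{}{r_v}$ intact — this is exactly what Lemma~\ref{lem:tower2primal} delivers, provided $r_v'\in\agtr{G_v}{T_v}{r_v}$, which must be read off from the synthesis condition $\parcut{\hat G}{G_v}=E_{\hat G}[A,\agtr{G_v}{T_v}{r_v}]$ together with the fact that $f_v\in F$ meets $G_v$ at $r_v'$. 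The secondary subtlety is making the ``$\hat F$-shortest walk enters $G_v$ through a cut edge of $\hat F$'' claim rigorous, which follows because $\parcut{\hat G}{G_v}\setminus\hat F$ edges force a weight increase of $+1$ on crossing, so an optimal walk uses the unique $\hat F$-edge of the cut (or reaches $r_v'$ along a zero-weight path of the comb and then crosses).
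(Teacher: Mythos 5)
Your proposal is correct and follows essentially the same route as the paper: a cut-decomposition of circuits and $A$--$A$ paths along the cuts $\parcut{\hat{G}}{G_v}$, with nonnegativity of the segments inside each $G_v$ supplied by Lemma~\ref{lem:nonregpath} together with Lemma~\ref{lem:tower2primal} for the case $r_v' \neq r_v$, then Lemma~\ref{lem:minimumjoin} for minimality of $\hat{F}$, and finally the witness path $P + f_v$ entering $G_v$ through $f_v$ at $r_v'$ to show every vertex of $V(G_v)$ lies in $D_A$, whence $C_A = \emptyset$ and maximality. The only cosmetic difference is that you frame step (5) as computing the exact minimum distance, whereas the paper (and what your argument actually needs) only requires exhibiting one negative-weight path from $A$ to each such vertex.
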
 
\begin{proof}  %thm:construct
First, let $\hat{F} := F \cup \bigcup_{v\in B} F_v$. 
\begin{pclaim} \label{claim:construct:nonneg} 
Let $Q$ be a subgraph of $\hat{G}$ that is a circuit or a path between vertices in $A$. 
Then, $w_{\hat{F}}(Q) \ge 0$. 
\end{pclaim} 
\begin{proof} 
Note $E(Q) \subseteq \bigcup_{v\in B} \parcut{\hat{G}}{G_v}  \cup E(G_v)$.   
For every $v\in B$, $Q. \parcut{\hat{G}}{G_v}$ has an even number of edges, among which only one can be from $\hat{F}$. 
Hence, $Q. \parcut{\hat{G}}{G_v}$ has a $\hat{F}$-weight that is no less than $0$.  
If $Q. E(G_v)$ is empty, then its $\hat{F}$-weight is obviously $0$. 
Assume otherwise. 
Lemma~\ref{lem:minimumjoin} implies that $Q. E(G_v)$ cannot be a circuit for any $v\in B$.  
Consequently, 
each connected component of $Q. E(G_v)$ is a path between vertices in $A_v$; 
hence, Lemmas~\ref{lem:nonregpath} and \ref{lem:tower2primal} implies that it has a $\hat{F}$-weight no less than $0$. 
Thus, the claim follows. 
\end{proof}

We next prove the first statement.  
Obviously, $\hat{F}$ is a join of $(\hat{G}, \hat{T})$.  
Claim~\ref{claim:construct:nonneg} and Lemma~\ref{lem:minimumjoin} further prove that $\hat{F}$ is a minimum join.

We next prove the remaining statement.  
Obviously, $A$ is contained in a single color class of $\hat{G}$. 
Claim~\ref{claim:construct:nonneg} implies that $A$ is extreme. 
Let $x\in V(\hat{G})\setminus A$, and let $v\in B$ be the index with $x\in V(G_v)$. 
Let $P$ be an $F_v$-shortest path between $r_v'$ and $x$. 
Lemma~\ref{lem:tower2primal} ensures that $(G_v, T_v\Delta\{r_v, r_v'\})$ is also primal with respect to $r_v'$ for the case $r_v\neq r_v'$. 
Hence, the $F_v$-weight of $P$ is no greater than $0$. 
Thus, $P + f_v$ is a path between $x$ and a vertex in $A$ with negative $\hat{F}$-weight. 
Hence, $A \cup D_A = V(\hat{G})$, which implies that $A$ is maximal bipartitic extreme.  
This completes the proof of the theorem. 
\end{proof}

\subsection{Characterization}

In this section, we summarize the main results from Sections~\ref{sec:reduction:decompose} and \ref{sec:reduction:construct}.

Theorems~\ref{thm:fr}, \ref{thm:decompose} and \ref{thm:construct} imply the following theorem.

\begin{theorem} 
Let $(\hat{G}, \hat{T})$ be a bipartite graft. 
If  $(\hat{G}, \hat{T})$ is a synthesis of a skeleton comb $(G, T; A, B)$ and a set of tooth grafts $\{ (G_v, T_v; r_v): v\in B\}$,  
then $A$ is a maximal bipartitic extreme set of $(\hat{G}, \hat{T})$ with empty fringe. 
Conversely, 
if $A$ is a maximal bipartitic extreme set of $(\hat{G}, \hat{T})$, 
then the graft obtained from $(\hat{G}, \hat{T})$ by the fringe removal is a synthesis of skeleton comb whose spine set is $A$ and a set of tooth grafts. 
\end{theorem}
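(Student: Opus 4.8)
The plan is to assemble the final characterization directly from the three theorems cited in its lead-in, doing nothing more than bookkeeping to line up their hypotheses. The statement has two implications, and each one is essentially the content of one of the earlier theorems once the right set is named.

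For the first implication, suppose $(\hat{G}, \hat{T})$ is a synthesis of a skeleton comb $(G, T; A, B)$ and tooth grafts $\{(G_v, T_v; r_v) : v \in B\}$. I would fix a minimum join $F$ of $(G, T)$ and, for each $v \in B$, the edge $f_v$ and vertex $r_v'$ as in Theorem~\ref{thm:construct}, together with the corresponding minimum joins $F_v$. Theorem~\ref{thm:construct} then says immediately that $A$ is a maximal bipartitic extreme set of $(\hat{G}, \hat{T})$ with $C_A = \emptyset$; since the fringe of $A$ is by definition the vertex set of $C_A$, the fringe is empty, which is exactly the first assertion.

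For the converse, suppose $A$ is a maximal bipartitic extreme set of $(\hat{G}, \hat{T})$, with $\hat{T}$ playing the role of $T$ in Theorem~\ref{thm:decompose}. First I would apply Theorem~\ref{thm:decompose} to obtain that $A$ is $F$-combic (for any minimum join $F$), that $\conn{\hat{G}[D_A]} = \conntodd{\hat{G} - A}{\hat{T}}$, that each $C \in \conntodd{\hat{G}-A}{\hat{T}}$ with its root edge $r_C$ induces a primal graft $(C, (\hat{T}\cap V(C))\Delta\{r_C\})$ whose $A$-neighbors lie in the corresponding $\agtr{C}{\cdot}{r_C}$, and that every member of $\connteven{\hat{G}-A}{\hat{T}}$ is a trivial factor-component. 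The trivial even components are precisely the fringe $C_A$ (by Lemma~\ref{lem:extreme2trivial}\ref{item:extreme2trivial:c2trivial} together with \ref{item:decompose:even}); let $(G', T')$ be the graft obtained by fringe removal. By Theorem~\ref{thm:fr}, $A$ remains maximal bipartitic extreme in $(G', T')$, minimum joins are preserved, and the relevant distances are unchanged, so the $F$-combic structure of $A$ persists in $(G', T')$ — now with no even components left, i.e. $\conntodd{G' - A}{T'} = \conn{G' - A}$. The remaining work is to recognize that $(G', T')$ is literally a synthesis: the skeleton $\skgtx{G'}{T'}{A}$ is a comb with spine set $A$ (this is where I would invoke that $\skgtx{G}{T}{X}$ is always a bipartite graft with $X$ as one color class, plus the primality of each tooth to get the comb condition), the odd components $C$ with their root-shifted $T$'s are the tooth grafts, primal with respect to $r_C$, and the cut-edge and neighbor conditions in the synthesis definition are exactly items \ref{item:combic2min:sk} of Lemma~\ref{lem:combic2min} and the neighbor containment from Theorem~\ref{thm:decompose}\ref{item:decompose:comp}.

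The only real friction I anticipate is in this last step: verifying, clause by clause, that the decomposition produced by Theorem~\ref{thm:decompose} on the fringe-removed graft satisfies the three defining conditions (i)--(iii) of a synthesis — in particular matching $\parcut{\hat{G}}{G_v} = E_{\hat{G}}[A, \agtr{G_v}{T_v}{r_v}]$ against the neighbor-containment statement, and checking that contracting each odd component back to a vertex reconstitutes the skeleton comb. This is bookkeeping rather than a genuine obstacle, but it is where care is needed, since one must keep straight which $T$ (shifted by the root or not) each tooth graft carries and confirm the comb $(G', T'; A, \{[C]\})$ coincides with $\skgtx{G'}{T'}{A}$. Everything else is a direct citation.
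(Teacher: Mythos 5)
Your proposal is correct and follows essentially the same route as the paper, which states this theorem without a separate proof, presenting it as an immediate consequence of Theorems~\ref{thm:fr}, \ref{thm:decompose}, and \ref{thm:construct}. You in fact supply more of the bookkeeping (identifying the fringe with the trivial even components and matching the synthesis conditions against the combic-set structure) than the paper itself records.
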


Lemma~\ref{lem:combic2min} and Theorems~\ref{thm:decompose} and \ref{thm:construct} also imply the following theorem. 

\begin{theorem} 
Let $(\hat{G}, \hat{T})$ be a synthesis of a skeleton comb $(G, T; A, B)$ and a set of tooth grafts $\{ (G_v, T_v; r_v): v\in B\}$.  
Then, 
\begin{rmenum} 
\item  a set of edges is a minimum join of $(\hat{G}, \hat{T})$  
if and only if it is of the form $F \cup \bigcup_{v\in B} F_v$, 
where $F$ is a minimum join of $(G, T; A, B)$, 
$F_v$ is a minimum join of $(G_v, T_v\Delta \{r_v'\})$  for every $v\in B$,  
and $r_v'\in V(G_v)$ is the end of the edge of $G$ that corresponds to the edge from $\parcut{G}{v}\cap F$. 
\item 
For every minimum join of $(G, T; A, B)$, 
there is a minimum join of $(\hat{G}, \hat{T})$ that contains it. 
\end{rmenum} 
\end{theorem}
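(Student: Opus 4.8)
The plan is to prove both parts by combining the ``synthesis'' structure theorems with the $F$-combic machinery of Section~2. The key observation is that when $(\hat G, \hat T)$ is a synthesis with skeleton comb $(G,T;A,B)$, Theorem~\ref{thm:construct} already tells us that $A$ is a maximal bipartitic extreme set, and (by Theorem~\ref{thm:decompose}, applied via the equivalence that extreme sets are combic) that $A$ is an $F$-combic set whose odd components are exactly $\conn{\hat G[D_A]}$; but here $C_A = \emptyset$, so each $G_v$ is, up to the contracted structure, a member of $\conntodd{\hat G - A}{\hat T}$. This identifies the skeleton $\skgtx{\hat G}{\hat T}{A}$ with the comb $(G,T;A,B)$ itself, and identifies each odd component of $\hat G - A$ with the corresponding tooth graft $(G_v, T_v \Delta \{r_v'\})$ (using the primality-with-respect-to-the-right-root bookkeeping from Lemma~\ref{lem:tower2primal}).

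For part (i), I would argue as follows. Given a minimum join $\hat F$ of $(\hat G, \hat T)$, set $F := \hat F \cap E(G)$ (reading $E(G)$ through the contraction, i.e.\ $F := \hat F \cap \bigcup_{v\in B}\parcut{\hat G}{G_v}$ together with the spine edges) and $F_v := \hat F \cap E(G_v)$ for each $v \in B$. By Lemma~\ref{lem:combic2min}\ref{item:combic2min:sk}, the trace of $\hat F$ on the skeleton is a minimum join of $\skgtx{\hat G}{\hat T}{A} = (G,T;A,B)$, which gives that $F$ is a minimum join of the skeleton comb; and by Lemma~\ref{lem:combic2min}\ref{item:combic2min:oc}, $F_v = \hat F \cap E(G_v)$ is a minimum join of $(G_v, (T_v\Delta\{r_v\}) \Delta \{r_v'\})$, which is $(G_v, T_v \Delta \{r_v'\})$ exactly when one tracks that $\hat T \cap V(G_v) = T_v \Delta \{r_v\}$ and $r_v'$ is the end of the sole $\hat F$-edge in $\parcut{\hat G}{G_v}$. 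Conversely, any set of the stated form is a minimum join of $(\hat G, \hat T)$ by the first statement of Theorem~\ref{thm:construct}. The two directions together give the iff.

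For part (ii), fix a minimum join $F$ of the skeleton comb $(G,T;A,B)$. For each $v \in B$, let $r_v'$ be the end in $G_v$ of the edge of $\hat G$ corresponding to the sole $F$-edge of $\parcut{G}{v}$; pick $F_v$ a minimum join of $(G_v, T_v)$ if $r_v' = r_v$ and of $(G_v, T_v \Delta\{r_v, r_v'\})$ otherwise (such joins exist, e.g.\ by applying Lemma~\ref{lem:sebo} or Lemma~\ref{lem:tower2primal} to transport a fixed minimum join along an $F$-shortest path). Then Theorem~\ref{thm:construct}(i) says $F \cup \bigcup_{v\in B} F_v$ is a minimum join of $(\hat G, \hat T)$, and it visibly contains $F$. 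The main obstacle I anticipate is purely notational rather than mathematical: reconciling the two parameterizations of the tooth graft's target set — the ``$T_v \Delta \{r_v'\}$'' form in the present statement versus the ``$T_v$ vs.\ $T_v \Delta\{r_v,r_v'\}$'' case split in Theorem~\ref{thm:construct} and Lemma~\ref{lem:combic2min}\ref{item:combic2min:oc} — and making sure the identification of $\conntodd{\hat G - A}{\hat T}$ with $\{G_v : v \in B\}$ is done cleanly through the contraction so that the edge $f_v$, the root $r_v$, and the distinguished end $r_v'$ all line up. Once that bookkeeping is set up, both parts follow by direct citation of the earlier results with no new estimates.
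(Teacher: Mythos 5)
Your proposal is correct and follows essentially the same route the paper intends: the paper offers no written proof beyond citing Lemma~\ref{lem:combic2min} and Theorems~\ref{thm:decompose} and \ref{thm:construct}, and your argument is exactly the assembly of those three results (Theorem~\ref{thm:construct} for $A$ being maximal bipartitic extreme with $C_A=\emptyset$ and for the sufficiency/part (ii), Theorem~\ref{thm:decompose} for $A$ being combic, and Lemma~\ref{lem:combic2min} for decomposing an arbitrary minimum join of the synthesis). You are also right that the only friction is notational: the statement's ``$T_v\Delta\{r_v'\}$'' should be read as $(T_v\Delta\{r_v\})\Delta\{r_v'\}$, i.e.\ the case split of Theorem~\ref{thm:construct}, which your bookkeeping handles correctly.
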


\section{Characterization of Primal Grafts} \label{sec:charprimal} 

Section~\ref{sec:reduction} reveals that 
every bipartite graft can be characterized as a synthesis of combs and primal bipartite grafts. 
In this section, we characterize primal bipartite grafts in terms of synthesis. 
That is, every primal bipartite graft is a synthesis in which the skeleton comb is primal.

\begin{theorem} \label{thm:primal2char} 
A bipartite graft $(G, T)$ is primal with respect to $r \in V(G)$ if and only if 
it is a synthesis of  
a skeleton comb $(G', T'; A, B)$ that is primal with respect to $r \in A$
 and a set of tooth grafts.  
\end{theorem}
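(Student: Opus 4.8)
The plan is to prove both directions by reduction to the results already established for maximal bipartitic extreme sets. Recall from the Remark following Theorem~\ref{thm:sebo} that a graft $(G, T)$ is primal with respect to $r$ if and only if $G$ equals the initial component of $r$; equivalently (using Lemma~\ref{lem:nonregpath}) $V(G) = \agtr{G}{T}{r}$ and $\agtr{G}{T}{r}$ is a maximal bipartitic extreme set with $D_{\agtr{G}{T}{r}} = C_{\agtr{G}{T}{r}} = \emptyset$. The strategy is to feed this observation into Theorems~\ref{thm:decompose} and \ref{thm:construct}, which characterize maximal bipartitic extreme sets as spine sets of syntheses, and then check that the extra ``initiality/primality'' hypothesis on $(G, T)$ corresponds exactly to primality of the skeleton comb.

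For the forward direction, suppose $(G, T)$ is primal with respect to $r$. Let $A$ be the color class of $G$ containing $r$; then $\agtr{G}{T}{r} \subseteq A$ is a maximal bipartitic extreme set with empty fringe. By Theorem~\ref{thm:decompose}, $\agtr{G}{T}{r}$ is $F$-combic, its skeleton $\skgtx{G}{T}{\agtr{G}{T}{r}}$ together with the primal tooth grafts $(C, (T\cap V(C))\Delta\{r_C\})$ exhibits $(G, T)$ as a synthesis of the skeleton comb $(G', T'; \agtr{G}{T}{r}, B)$ (where $B = \{[C] : C \in \conntodd{G - \agtr{G}{T}{r}}{T}\}$) and those tooth grafts. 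It remains to verify that this skeleton comb is itself primal with respect to $r$. This I would do by transporting $F$-distances from $(G, T)$ to $(G', T')$: a vertex of $G'$ is either a vertex of $\agtr{G}{T}{r}$ (at level $0$) or some $[C]$, and one shows $\distgtf{G'}{T'}{F'}{r}{[C]} = \min_{x \in V(C)} \distgtf{G}{T}{F}{r}{x} + 1 \le 0$ using Theorem~\ref{thm:sebo}(iii)'s statement that the minimum of the $F$-distance inside each tooth is one less than the global minimum; since $(G, T)$ is $r$-primal every such distance is $\le 0$, hence so is every distance in the skeleton, proving $(G', T')$ is $r$-primal.

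For the converse, suppose $(G, T)$ is a synthesis of an $r$-primal skeleton comb $(G', T'; A, B)$ with $r \in A$ and a set of tooth grafts. By Theorem~\ref{thm:construct}, $A$ is a maximal bipartitic extreme set of $(G, T)$ with $C_A = \emptyset$; and by Lemma~\ref{lem:maxext2init}-style reasoning (or directly, via Lemma~\ref{lem:extend} applied to the rootlization, or by computing distances through the explicit minimum join $\hat F = F \cup \bigcup_v F_v$ built in Theorem~\ref{thm:construct}) one checks that, because the skeleton is $r$-primal, the $F$-distance from $r$ reaches every vertex of $G'$ with value $\le 0$, and then descending into each tooth $(G_v, T_v\Delta\{r_v,r_v'\})$ — which is primal with respect to $r_v'$ by Lemma~\ref{lem:tower2primal} — the distance stays $\le 0$ all the way down, since each path $P + f_v$ from $r$ to a vertex $x$ of $G_v$ has $\hat F$-weight $= \distgtf{G'}{T'}{F}{r}{v} + (\text{nonpositive})$. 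Hence $\distgtf{G}{T}{\hat F}{r}{x} \le 0$ for all $x$, i.e. $(G, T)$ is $r$-primal.

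The main obstacle I expect is the bookkeeping in showing that $r$-primality of the skeleton comb is \emph{equivalent} to, not just implied by, $r$-primality of the synthesis — in particular making the distance-transfer formula $\distgtf{G'}{T'}{F'}{r}{[C]} = \min_{x\in V(C)}\distgtf{G}{T}{F}{r}{x} + 1$ (and its converse) fully rigorous. This requires carefully relating a shortest path in $G'$ that enters the contracted vertex $[C]$ to a path in $G$ that enters $C$ through its critical edge $e_C$, invoking Lemma~\ref{lem:combic2min}(iii) for the ``$+1$'' shift across the skeleton and Theorem~\ref{thm:sebo}(iii)--(iv) to control how the distance continues inside each tooth. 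Once that correspondence is pinned down, both implications are immediate from the quantifier ``$\le 0$ for every vertex''.
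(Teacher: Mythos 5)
Your proposal is correct and takes essentially the same route as the paper: the direction ``primal $\Rightarrow$ synthesis'' is read off from Theorem~\ref{thm:sebo} (whose item (ii) already states that the skeleton of $\agtr{G}{T}{r}$ is an $r$-primal comb, so the distance-transfer formula you flag as the main obstacle is not actually needed for that direction), and ``synthesis $\Rightarrow$ primal'' is proved, exactly as in the paper, by combining Theorem~\ref{thm:construct} with an explicit concatenation of a shortest skeleton path and shortest paths inside the teeth. The one step you should make explicit is the lifting of the skeleton path through \emph{intermediate} contracted teeth $w$: there you must splice in a path of $G_w$ between the two ends of the boundary edges used by $P$ at $w$, and this spliced path has $F$-weight exactly $0$ because both of its ends lie in $\agtr{G_w}{T_w}{r_w}$ by the synthesis condition, invoking Lemma~\ref{lem:nonregpath} for the lower bound and Lemma~\ref{lem:tower2primal} for re-rooting when the $F$-edge of $\parcut{G}{G_w}$ does not end at the designated root.
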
 
\begin{proof} %thm:primal2char 
The sufficiency is immediate from Theorem~\ref{thm:sebo}. 
We prove the necessity in the following. 
Let $(G, T)$ be a synthesis of a skeleton comb $(G', T'; A, B)$ that is primal with respect to $r\in A$ 
and a set of tooth grafts $\{ (G_v, T_v; p_v): v\in B\}$.  
Let $F$ be a minimum join of $(G, T)$.

Theorem~\ref{thm:construct} implies that $A$ is a maximal bipartitic extreme set of $(G, T)$ with $A \cup D_A = V(G)$. 
Theorem~\ref{thm:decompose} further implies that $A$ is an $F$-combic set of $(G, T)$. 
For each $v\in B$, let $f_v$ be the edge from $\parcut{G}{G_v} \cap F$, and let $r_v$ be the end of $f_v$ from $V(G_v)$. 
Lemma~\ref{lem:combic2min} implies that $F\cap E_G[A, D_A]$ forms a minimum join of $(G', T')$, 
and $F\cap E(G_v)$ is a minimum join of $(G_v, T_v\Delta \{r_v\})$ for every $v\in B$.

\begin{pclaim} 
For every $x\in A$, $\distgtf{G}{T}{F}{r}{x} = 0$. 
For every $v\in B$ and every $x\in V(G_v)$, $\distgtf{G}{T}{F}{r}{x} < 0$. 
\end{pclaim} 
\begin{proof} 
Let $x\in V(G)$, and define a vertex $x'$ of $G'$ as follows: 
Let $x':= x$ if $x$ is a vertex in $A$; 
otherwise, let $x':= v$ where $v$ is the vertex from $B$ with $x\in V(G_v)$. %if $x$ is a vertex of $G_v$ for $v\in B$. 
Let $P$ be an $F\cap E_G[A, D_A]$-shortest path of $(G', T')$ between $r$ and $x'$. 

For each $w\in B \cap V(P)$, 
define $s_w \in V(G_w)$ as follows:   
If $w \neq x'$, then let $e_w$ be the edge from $\parcut{P}{w}\setminus \{f_w\}$, 
and let $s_w \in V(G_w)$ be the end of $e_w$ as an edge of $G$; 
in contrast, let $s_w := x$ if $w = x'$. 
Then, for each $w\in B \cap V(P)$,   
let $Q_w$ be an $F\cap E(G_w)$-shortest path of $G_w$ between $r_w$ and $s_w$. 

Let $R$ be the sum of $G. E(P)$ and  the paths $ Q_w$ where $w$ is taken over every  $w\in B \cap V(P)$. 
 Then, $R$ is a path between $r$ and $x$.
 The $F$-weight of $G. E(P)$ is equal to the $F\cap E_G[A, D_A]$-weight of $P$. 
Therefore, according to Lemma~\ref{lem:qcomb2dist}, it is equal to $0$ if $x\in A$ holds; otherwise, it is $-1$. 
For each $w\in B\cap V(P)$,  we have $w_F(Q_w) = 0$ if $w \neq x'$ holds; 
if $w = x'$ holds, we have $w_F(Q_w) \le 0$. 
Hence, $w_F(R) = 0$ for $x\in A$, whereas $w_F(R) < 0$ for $x\in V(G)\setminus A$. 
This proves the claim. 
\end{proof}

That is, $G$ is primal with respect to $r$ for which $A = \agtr{G}{T}{r}$. 
The theorem is proved. 
\end{proof}

Theorem~\ref{thm:primal2char} means 
that primal bipartite grafts can be characterized in the following constructive characterization form.

\begin{definition} 
For every vertex symbol $r$, let $\skcomb{r}$ denote the family of combs of the form $(G, T; A, B)$ 
such that $(G, T; A, B)$ is a primal with respect to  $r \in A$.  
Let $\primal$ denote the family of triples defined as follows. 
\begin{rmenum} 
\item For every vertex symbol $r$, 
if $(G, T; A, B)$ is a member of $\skcomb{r}$, then $(G, T; r)$ is a member of $\primal$. 
\item  
Let $(G, T; A, B) \in \skcomb{r}$, and let $(G_v, T_v; r_v)\in \primal$ for every $v\in B$. 
If $(\hat{G}, \hat{T})$ is a synthesis of  skeleton comb $(G, T; A, B)$ and  set of tooth grafts $\{(G_v, T_v; r_v): v\in B\}$, 
then $(\hat{G}, \hat{T}; r)$ is a member of $\primal$. 
\end{rmenum} 
\end{definition}

The family $\primal$ is well-defined by Theorem~\ref{thm:primal2char}. 
The next theorem is a restatement of Theorem~\ref{thm:primal2char}. 

\begin{theorem} 
For every vertex symbol $r$, 
$(G, T; r)\in \primal$ if and only if $(G, T)$ is a primal bipartite graft with respect to $r$. 
\end{theorem}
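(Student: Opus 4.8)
The plan is to observe that this final theorem is a direct restatement of Theorem~\ref{thm:primal2char} once we unfold the recursive definition of the family $\primal$. So the proof is essentially an induction argument organized around the recursive structure, using Theorem~\ref{thm:primal2char} at each stage; no genuinely new combinatorics is needed.

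First I would prove the ``only if'' direction: if $(G, T; r) \in \primal$, then $(G, T)$ is primal with respect to $r$. I would argue by induction on the depth of the recursion witnessing membership of $(G, T; r)$ in $\primal$ (equivalently, on the number of applications of clause (ii)). In the base case, $(G, T; A, B) \in \skcomb{r}$, so $(G, T)$ is a primal comb with respect to $r$, hence in particular a primal bipartite graft; done. For the inductive step, $(\hat{G}, \hat{T})$ is a synthesis of a skeleton comb $(G, T; A, B) \in \skcomb{r}$ and tooth grafts $\{(G_v, T_v; r_v) : v \in B\}$ where each $(G_v, T_v; r_v) \in \primal$ was witnessed by a shallower recursion; by the induction hypothesis each $(G_v, T_v)$ is a primal bipartite graft with respect to $r_v$, so $\{(G_v, T_v; r_v) : v \in B\}$ is a legitimate set of tooth grafts in the sense of the synthesis definition, and $(G, T; A, B)$ is a skeleton comb primal with respect to $r$. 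Then the sufficiency part of Theorem~\ref{thm:primal2char} immediately gives that $(\hat{G}, \hat{T})$ is primal with respect to $r$.

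For the ``if'' direction: suppose $(G, T)$ is a primal bipartite graft with respect to $r$. I would again argue by induction, this time on $|V(G)|$ (or on $-\min_{x} \lambda(\cdot)$, any parameter that strictly decreases when passing to tooth grafts would do, since each tooth graft has $\min_x \lambda$ closer to $0$ by Theorem~\ref{thm:sebo}(iii)). By the necessity part of Theorem~\ref{thm:primal2char}, $(G, T)$ is a synthesis of some skeleton comb $(G', T'; A, B)$ that is primal with respect to $r \in A$ and some set of tooth grafts $\{(G_v, T_v; p_v) : v \in B\}$, where each $(G_v, T_v)$ is a primal bipartite graft with respect to $p_v$ and $V(G_v) \subsetneq V(G)$. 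The induction hypothesis gives $(G_v, T_v; p_v) \in \primal$ for each $v \in B$; since also $(G', T'; A, B) \in \skcomb{r}$, clause (ii) of the definition of $\primal$ applies and yields $(G, T; r) \in \primal$. The base case is when $B$ contributes no nontrivial teeth, i.e.\ $(G, T)$ is itself a primal comb, handled by clause (i).

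I do not expect a serious obstacle here; the content has already been extracted into Theorem~\ref{thm:primal2char}, and the remaining work is purely bookkeeping: checking that the recursive family $\primal$ is genuinely well-defined (which the paper already asserts follows from Theorem~\ref{thm:primal2char}), and setting up the correct induction parameter so that the recursion terminates. The only mild subtlety is making sure the induction in the ``if'' direction is well-founded --- one must confirm that the tooth grafts produced by Theorem~\ref{thm:primal2char} are strictly smaller, which follows because $V(G) = A \,\dot\cup\, \bigcup_{v \in B} V(G_v)$ with $A \ni r$ nonempty, so each $V(G_v)$ is a proper subset of $V(G)$. Given that, the proof is a short two-paragraph induction and can essentially be stated as ``unfold the definition of $\primal$ and apply Theorem~\ref{thm:primal2char} recursively.''
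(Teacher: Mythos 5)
Your proposal is correct and matches the paper's intent: the paper gives no explicit proof, simply declaring the theorem a restatement of Theorem~\ref{thm:primal2char} once the recursive definition of $\primal$ is unfolded, which is exactly the two-directional induction you spell out. Your added care about well-foundedness (each tooth graft being a proper subgraph since $A\ni r$ is nonempty) is a reasonable bit of bookkeeping the paper leaves implicit.
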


\section{Characterization of Primal Combs} 

Preceding sections reveal that primal bipartite grafts are important components of general bipartite grafts, 
and the last section reveals that primal combs are important components of primal bipartite grafts. 
In this section, we characterize primal combs in terms of the cathedral decomposition~\cite{kita2021bipartite}.

\begin{theorem}  \label{thm:nonposi2poset} 
A comb $(G, T; A, B)$ is primal with respect to $r\in A$ if and only if 
the poset $(\tcomp{G}{T}, \preceq)$ has the minimum element $C$ with $r\in A\cap V(C)$. 
\end{theorem}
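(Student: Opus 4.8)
The plan is to reduce the characterization of primal combs to the cathedral decomposition machinery of~\cite{kita2021bipartite}. The comb $(G, T; A, B)$ has a canonical partition of its factor-component poset $(\tcomp{G}{T}, \preceq)$, and the claim says primality with respect to $r\in A$ is exactly the condition that this poset has a minimum element whose color class $A$ contains $r$. I would organize the argument around the $F$-distance function $\distgtf{G}{T}{F}{r}{\cdot}$ for a minimum join $F$, using Theorem~\ref{thm:sebo} (restricted to combs, where it becomes the comb-specific distance theorem of the previous paper) to translate between distances and the poset order.

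For the necessity direction, suppose $(G, T; A, B)$ is $r$-primal, so $\distgtf{G}{T}{F}{r}{x}\le 0$ for all $x\in V(G)$ and hence $\agtr{G}{T}{r} = \levelr{0}{r}$, $\dgtr{G}{T}{r} = \layr{0}{r}$, and $G = \init{r}$. First I would identify the factor-component $C$ containing $r$: using the distance theorem for combs from~\cite{kita2021bipartite}, the level-$0$ set $\levelr{0}{r}$ and the structure of $G[\layler{0}{r}]$ should force $C$ to be the ``bottom'' factor-component, i.e.\ every other factor-component lies strictly above it in $\preceq$. The key computation is that for any factor-component $C'$ with $C' \neq C$, some vertex of $C'$ has strictly negative $F$-distance from $r$, and the way $F$ restricts to each factor-component (governed by the barrier/cathedral structure) pins down $C'\succ C$; no factor-component can be incomparable to $C$ because primality means the whole graph is reachable ``downward'' from $r$ through a single initial component. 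Conversely, if $(\tcomp{G}{T}, \preceq)$ has minimum element $C$ with $r\in A\cap V(C)$, I would build a minimum join $F$ adapted to the poset and verify directly, factor-component by factor-component in $\preceq$-increasing order, that $\distgtf{G}{T}{F}{r}{x}\le 0$ everywhere; the minimality of $C$ ensures each step only decreases (or maintains) the distance, so no positive distance ever appears.

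The main obstacle will be the bookkeeping connecting the $\preceq$ order on $\tcomp{G}{T}$ with the sign of $F$-distances and the combic/skeleton structure — essentially importing and correctly invoking the right distance lemma from~\cite{kita2021bipartite} (the comb analogue of Theorem~\ref{thm:sebo}, here referred to via \texttt{Lemma~\ref{lem:qcomb2dist}}) and checking that ``minimum element of the poset'' is the same condition as ``$G$ equals the initial component of $r$''. The subtle point is handling the tooth set $B$ and the barrier structure: in a comb, the teeth and spine interact through the cathedral decomposition, and one must check that a vertex of $A\cap V(C)$ being the root is compatible with $C$ being the poset minimum, which is where the hypothesis $r\in A$ (rather than $r\in B$) is genuinely used. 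Once the dictionary between poset order and distance sign is set up cleanly, both directions should follow by induction along $\preceq$.
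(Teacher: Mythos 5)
Your plan correctly identifies the ambient machinery (a minimum join $F$, the comb distance lemmas, the cathedral poset $\preceq$ on $\tcomp{G}{T}$), but it defers precisely the step that constitutes the proof. You write that ``the main obstacle will be the bookkeeping connecting the $\preceq$ order on $\tcomp{G}{T}$ with the sign of $F$-distances'' and that ``once the dictionary between poset order and distance sign is set up cleanly, both directions should follow.'' That dictionary is the theorem; a proposal that postpones it has no content beyond the statement itself. In particular, your forward direction asserts that for each factor-component $C'\neq C$ the negativity of some distance ``pins down $C'\succ C$,'' but you give no mechanism for this, and negativity alone cannot do it: in a primal comb every vertex of $B$ has negative distance from $r$, so every nontrivial factor-component contains such vertices, and nothing in that observation distinguishes the minimum element from an incomparable one.

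The paper closes this gap by a concrete contraction argument that your proposal does not contain. Writing $G_r$ for the factor-component of $r$, one first proves $E_G[A\cap V(G_r),\, B\setminus V(G_r)]=\emptyset$; this is genuinely nontrivial and is obtained by splicing an $F$-shortest path from $r$ to a hypothetical neighbor $y\in B\setminus V(G_r)$ with a path inside $G_r$ (supplied by Lemma~\ref{lem:elemcomb2nonposi}) to produce a circuit of $F$-weight $0$ through a non-allowed edge of $\parcut{G}{G_r}$, contradicting Lemma~\ref{lem:circuit}. Only with this no-edge claim in hand can one form the quotient $(G,T;A,B)/G_r$, show that $F\setminus E(G_r)$ is a minimum join of it, compute that shortest paths to $[G_r]$ have weight $0$ or $1$ according to color class, and conclude that the quotient is a quasicomb rooted at $[G_r]$ --- which is exactly what it means for $G_r$ to be the minimum of $(\tcomp{G}{T},\preceq)$. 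Your per-component approach never produces this quotient structure. For the converse your idea of building a join ``adapted to the poset'' and inducting along $\preceq$ is plausible but again unexecuted; the paper's route is the simpler one of concatenating, for a fixed minimum join, a path inside the minimum component with a path climbing to the component of the target vertex. To repair your proposal you would need to (i) prove the no-edge claim, and (ii) state and use the actual characterization of the minimum element via the quasicomb quotient, rather than treating the order--distance correspondence as bookkeeping to be filled in later.
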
 
\begin{proof} %lem:nonposi2poset
We first prove the sufficiency. 
Assume that $(G, T; A, B)$ is a primal comb with respect to  $r\in A$. Let $F$ be a minimum join of $(G, T; A, B)$. 
Let $G_r \in \tcomp{G}{T}$ be the factor-component with $r\in V(G_r)$. 

\begin{pclaim} \label{claim:noedge} 
$E_G[ A\cap V(G_r), B\setminus V(G_r)] = \emptyset$. 
\end{pclaim} 
\begin{proof} 
Suppose, to the contrary, that $x\in A\cap V(G_r)$ and $y\in B\setminus V(G_r)$ are adjacent vertices. 
Let $P$ be an $F$-shortest path between $y$ and $r$;  Lemma~\ref{lem:qcomb2dist} implies $w_F(P) = -1$. 
Trace $P$ from $y$, and let $z$ be the first encountered vertex in $V(G_r)$. 
If $z\in A$ holds, then Lemma~\ref{lem:qcomb2path} implies $w_F(yPz) = -1$. 
Under Lemma~\ref{lem:elemcomb2nonposi}, let $Q$ be a path of $G_r$ between $x$ and $z$ with $w_F(Q) = 0$. 
Then, $yPz + Q + xy$ is a circuit of $F$-weight $0$ that contains non-allowed edges from $\parcut{G}{G_r}$, 
which contradicts Lemma~\ref{lem:circuit}. 
If $z\in B$ holds, then $w_F(yPz) = 0$. 
We can also find a path of $G_r$ between $x$ and $z$ with $F$-weight $-1$,  
and a similar discussion leads to a contradiction. 
This proves the claim. 
\end{proof} 

\begin{pclaim} \label{claim:dist} 
$(G, T; A, B)/G_r$ is a bipartite graft, and $F \setminus E(G_r)$ is a minimum join of this graft. 
 Among the paths in $(G, T; A, B)/G_r$ between $x\in V(G)\setminus V(G_r)$ and $[G_r]$,  
 every $F \setminus E(G_r)$-shortest one is of weight $0$ or $1$ if $x$ is in $A$ or $B$, respectively. 
\end{pclaim} 
\begin{proof} 
It is obvious from Claim~\ref{claim:noedge} that $(G, T; A, B)/G_r$ is a bipartite graft. 
It is also obvious that $F\setminus E(G_r)$ is a join of this graft. 
Furthermore, Lemma~\ref{lem:qcomb2dist} implies that the $F$-distance between any neighbors of the contracted vertex $[G_r]$ is $0$. 
Therefore,  $(G, T; A, B)/G_r$ has no circuit $C$ of negative $F\setminus E(G_r)$-weight.  
Hence, Lemma~\ref{lem:minimumjoin} proves that  $F\setminus E(G_r)$ is a minimum join of $(G, T; A, B)/G_r$.

Next, let $P$ be an $F$-shortest path of $(G, T)$ between $r$ and $x$. 
Trace $P$ from $x$, and let $z$ be the first encountered vertex in $G_r$. 
Claim~\ref{claim:noedge} implies that $w_F(xPz)$ is equal to $1$ or $0$ if $x$ is in $A$ or $B$, respectively.   
This path $xPz$ forms a  path of  $(G, T; A, B)/G_r$ between $x$ and $[G_r]$.  
Lemma~\ref{lem:qcomb2dist} ensures that it is $F\setminus E(G_r)$-shortest. 
Hence, we obtain the second claim. 
\end{proof} 

Claim~\ref{claim:dist} now proves that $(G, T; A, B)/G_r$ is a quasicomb with root $[G_r]$. 
This completes the proof of the sufficiency. 
The necessity can easily be proved by considering the concatenation of paths. 
\end{proof}

\begin{ac} 
This study is supported by JSPS KAKENHI Grant Number 18K13451. 
\end{ac}

\bibliographystyle{splncs03.bst}
\bibliography{tbicolor.bib}

\begin{thebibliography}{1}
\providecommand{\url}[1]{\texttt{#1}}
\providecommand{\urlprefix}{URL }

\bibitem{kita2020bipartite}
Kita, N.: Bipartite graft {I}: {D}ulmage-{M}endelsohn decomposition for combs.
  arXiv preprint arXiv:2007.12943  (2020)

\bibitem{kita2021bipartite}
Kita, N.: Bipartite graft {II}: Cathedral decomposition for combs. arXiv
  preprint arXiv:2101.06678  (2021)

\bibitem{DBLP:journals/jct/Sebo90}
Seb{\"{o}}, A.: Undirected distances and the postman-structure of graphs. J.
  Comb. Theory, Ser. {B}  49(1),  10--39 (1990)

\end{thebibliography}

\setcounter{theorem}{1}
\renewcommand{\thetheorem}{A.\arabic{theorem}}

\section*{Appendix}

The appendix provides some preliminary lemmas inherited from Kita~\cite{kita2020bipartite}. 

\begin{lemma}[see also Seb\"o~\cite{DBLP:journals/jct/Sebo90}] \label{lem:minimumjoin} 
Let $(G, T)$ be a graft, and let $F$ be a join of $(G, T)$. 
Then, $F$ is a minimum join of $(G, T)$ if and only if 
there is no circuit $C$ with $w_F(C) < 0$. 
\end{lemma}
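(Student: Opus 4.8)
The plan is to prove both implications, taking the forward direction (minimum join implies no negative circuit) by contraposition and the converse by a symmetric-difference argument comparing $F$ with an arbitrary minimum join.

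First I would dispose of the easy direction. Suppose $F$ is a join of $(G, T)$ and some circuit $C$ satisfies $w_F(C) < 0$. Since a circuit meets the set of edges incident to each vertex in an even number of edges, the set $F \Delta E(C)$ has, at every vertex $v$, the same degree parity as $F$; hence $F \Delta E(C)$ is again a join of $(G, T)$. Counting edges, $|F \Delta E(C)| = |F| - |F \cap E(C)| + |E(C) \setminus F| = |F| + w_F(C) < |F|$, so $F$ is not a minimum join. Contrapositively, if $F$ is a minimum join then no circuit has negative $F$-weight.

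For the converse, assume $F$ is a join with no circuit of negative $F$-weight, and let $F^\ast$ be a minimum join of $(G, T)$. The key step is to analyze the subgraph $J$ induced by the edge set $F \Delta F^\ast$. At each vertex $v$, both $F$ and $F^\ast$ meet the edges incident to $v$ in a number of edges of the same parity (odd exactly when $v \in T$), so $v$ has even degree in $J$. An even subgraph decomposes into edge-disjoint circuits $C_1, \dots, C_k$ (obtained inductively by repeatedly extracting a circuit through a vertex of positive degree). Since the edges of $J$ lying in $F$ are exactly those of $F \setminus F^\ast$ and those not in $F$ are exactly those of $F^\ast \setminus F$, we get
\[
|F^\ast| - |F| = |F^\ast \setminus F| - |F \setminus F^\ast| = w_F(J) = \sum_{i=1}^{k} w_F(C_i) \ge 0,
\]
where the final inequality uses the hypothesis. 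Hence $|F| \le |F^\ast|$, so $F$ is a minimum join.

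The main obstacle I anticipate is making the even-subgraph-to-circuits decomposition rigorous in the graft setting, and being careful that "circuit" is used here in precisely the sense for which the hypothesis $w_F(C) \ge 0$ is available; one must also check that edges belonging to both $F$ and $F^\ast$ (which cancel in $F \Delta F^\ast$) play no role, and handle the degenerate case $F = F^\ast$, where $J$ is empty and the conclusion is trivial. Everything else is routine parity and cardinality bookkeeping.
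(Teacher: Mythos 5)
Your argument is correct: the easy direction by switching along a negative circuit, and the converse by decomposing the symmetric difference $F \Delta F^\ast$ with a minimum join into edge-disjoint circuits and summing their $F$-weights, is the standard proof of this fact. Note that the paper itself gives no proof of this lemma; it is stated in the appendix as a known preliminary result inherited from Seb\H{o} and from the author's earlier paper, and your write-up (including the parity bookkeeping showing $F\Delta E(C)$ and $F\Delta F^\ast$ have the required degree parities) fills that in correctly.
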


\begin{lemma}[see also Seb\"o~\cite{DBLP:journals/jct/Sebo90}] \label{lem:circuit}  
Let $(G, T)$ be graft, and let $F$ be a minimum join. 
If $C$ is a circuit with $w_F(C) = 0$, then $F\Delta E(C)$ is also a minimum join of $(G, T)$. 
Accordingly, every edge of $C$ is allowed. 
\end{lemma}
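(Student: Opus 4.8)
The plan is to exploit the standard relationship between the $F$-weight of a subgraph and the net change in cardinality produced by the symmetric difference with $F$. Concretely, for any circuit $C$ one has $|F \Delta E(C)| = |F| + w_F(C)$, because passing from $F$ to $F \Delta E(C)$ removes exactly the edges of $E(C) \cap F$ and inserts exactly the edges of $E(C) \setminus F$, and $w_F(C) = |E(C) \setminus F| - |E(C) \cap F|$ records precisely this difference. Thus, once $F \Delta E(C)$ is known to be a join, the hypothesis $w_F(C) = 0$ forces $|F \Delta E(C)| = |F|$, and the minimality of $F$ transfers to $F \Delta E(C)$ with no further work.

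First I would verify that $F \Delta E(C)$ is a join of $(G, T)$. Since $C$ is a circuit, every vertex of $G$ has even degree in the subgraph $C$, so taking the symmetric difference of $F$ with $E(C)$ changes the degree at each vertex by an even amount and hence preserves its parity. Consequently the set of odd-degree vertices of $F \Delta E(C)$ coincides with that of $F$, namely $T$, so $F \Delta E(C)$ is indeed a join. Combining this with the cardinality identity above and the hypothesis $w_F(C) = 0$ shows that $F \Delta E(C)$ is a join of the same size as the minimum join $F$, hence itself a minimum join. This settles the first assertion.

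For the concluding assertion I would show directly that every edge of $C$ lies in some minimum join. Each $e \in E(C)$ satisfies either $e \in E(C) \cap F$ or $e \in E(C) \setminus F$. In the first case $e \in F$, which is a minimum join by assumption; in the second case $e \in F \Delta E(C)$, which was just shown to be a minimum join. In either situation $e$ belongs to a minimum join and is therefore allowed, as claimed.

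The argument is essentially bookkeeping, so there is no genuine difficulty; the only point requiring care is conceptual rather than computational, namely making the identity $|F \Delta E(C)| = |F| + w_F(C)$ precise under the weight convention inherited from \cite{kita2021bipartite}, and checking that the parity argument preserves the join property at \emph{all} vertices of $G$, not merely those incident with $C$. Both are immediate once the definitions of $w_F$, of a join, and of an allowed edge are recalled.
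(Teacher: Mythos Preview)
Your argument is correct and is the standard proof of this fact. Note that the paper does not actually supply its own proof of this lemma: it appears in the appendix as a preliminary result inherited from \cite{kita2020bipartite} and attributed to Seb\"o, stated without demonstration. Your write-up would serve perfectly well as the omitted proof.
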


\begin{lemma}[Kita~\cite{kita2020bipartite}] \label{lem:elemcomb2nonposi} 
If $(G, T; A, B)$ is a factor-connected comb, then 
\begin{rmenum} 
\item $\distgt{G}{T}{x}{y} = 0$ for every $x, y\in A$,  
\item $\distgt{G}{T}{x}{y} = -1$ for every $x\in A$ and every $y\in B$, and 
\item $\distgt{G}{T}{x}{y} \in \{ 0, -2\}$ for every $x, y \in B$. 
\end{rmenum} 
\end{lemma}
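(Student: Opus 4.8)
The plan is to work throughout with a fixed minimum join $F$ and with the fact, inherited from Seb\"o's theory, that $\distgt{G}{T}{x}{y}$ is the minimum $F$-weight $w_F(P)$ over paths $P$ between $x$ and $y$, a value independent of the chosen minimum join. The first step is to record the parity constraint coming from bipartiteness: a path between two vertices of the same color class has even length, while a path between the two classes has odd length. Since $w_F(P)=|E(P)|-2|E(P)\cap F|$ has the same parity as $|E(P)|$, the distances in (i) and (iii) are even and those in (ii) are odd. This reduces the lemma to pinning the exact value in each case: that the $A$--$A$ distance is $0$, that the $A$--$B$ distance is $-1$, and that the $B$--$B$ distance lies in $\{0,-2\}$.

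For the upper bounds I would use factor-connectivity. Because every edge of a factor-connected comb is allowed, for the fixed $F$ any two vertices are joined by an $F$-alternating path, and such paths serve as witnesses of the claimed values. An $A$--$A$ alternating path uses equally many $F$- and non-$F$-edges, so its $F$-weight is $0$; an $A$--$B$ alternating path whose last edge lies in $F$ has weight $-1$; and routing two teeth into a common spine vertex along two $F$-edges gives a $B$--$B$ path of weight $-2$, or, when no such common descent is available, a weight-$0$ detour through the spine. This yields $\distgt{G}{T}{x}{y}\le 0$ for $x,y\in A$, $\distgt{G}{T}{x}{y}\le -1$ for $x\in A$, $y\in B$, and $\distgt{G}{T}{x}{y}\le 0$ for $x,y\in B$.

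For the lower bounds I would invoke the minimality of $F$ through Lemma~\ref{lem:minimumjoin}: no circuit has negative $F$-weight. Given an arbitrary path $P$ between the pair and a witnessing path $P'$ of the claimed weight, the symmetric difference $E(P)\Delta E(P')$ has all degrees even, hence decomposes into edge-disjoint circuits of nonnegative $F$-weight; analyzing this decomposition relative to the spine/teeth structure of the comb forces $w_F(P)\ge 0$ in (i) and $w_F(P)\ge -2$ in (iii). For the sharp bound $\ge -1$ in (ii) I would argue by contradiction: a path of $F$-weight $\le -3$ from $x\in A$ to $y\in B$, closed up with an allowed spine--tooth edge, would produce a circuit of negative $F$-weight, contradicting Lemma~\ref{lem:minimumjoin}, with Lemma~\ref{lem:circuit} handling the borderline zero-weight circuits that arise.

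The main obstacle is case (iii), the $B$--$B$ dichotomy: ruling out even values below $-2$ and cleanly distinguishing when the distance is $0$ from when it is $-2$. The difficulty is that the naive triangle inequality \emph{fails} for graft $F$-distances, since a shortest path may traverse a single $F$-edge in both directions (one already sees this in a four-cycle where a $B$--$B$ distance is $-2$ although every $A$--$B$ distance is $-1$), so one cannot simply chain the values of (i) and (ii). Establishing (iii), and likewise the non-adjacent instances of the lower bound in (ii), therefore relies on the precise combinatorial description of factor-connected combs and their minimum joins from Kita~\cite{kita2020bipartite} --- in particular the $F$-alternating reachability among the teeth --- rather than on bipartiteness and minimality alone.
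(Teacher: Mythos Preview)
The paper does not prove this lemma: it is quoted in the appendix as a preliminary result from Kita~\cite{kita2020bipartite} and is stated without proof. There is therefore no argument in the present paper to compare your proposal against.

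That said, your outline is broadly sound for the upper bounds (factor-connectivity supplies the requisite $F$-alternating witnesses), but your lower-bound mechanism has a gap. From the decomposition of $E(P)\Delta E(P')$ into circuits of nonnegative $F$-weight you only get
\[
w_F(P)+w_F(P')-2\,w_F\bigl(E(P)\cap E(P')\bigr)\ \ge\ 0,
\]
and since $w_F(E(P)\cap E(P'))$ can be negative this does not by itself yield $w_F(P)\ge w_F(P')$. The clean route, which you gesture at with ``spine/teeth structure'' but never make explicit, is to use directly that in a comb every vertex of $B$ has $F$-degree exactly one: along any path, each internal $B$-vertex is incident with two path-edges of which at most one lies in $F$, while each $B$-endpoint contributes at most one $F$-edge. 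Counting then gives $w_F(P)\ge 0$, $\ge -1$, $\ge -2$ in cases (i), (ii), (iii) respectively, with no circuit argument needed. Your closing paragraph correctly flags that the dichotomy $\{0,-2\}$ in (iii) requires the finer alternating-reachability analysis of the cited paper, but for the inequalities themselves the symmetric-difference detour is both unnecessary and, as written, incomplete.
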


\begin{lemma}[Kita~\cite{kita2020bipartite}] \label{lem:qcomb2dist} 
Let $(G, T; A, B)$ be a quasicomb, and let $F$ be a minimum join of $(G, T; A, B)$. 
Then, the following properties hold. 
\begin{rmenum} 
\item $\distgtf{G}{T}{F}{x}{y} \ge 0$ for every $x, y \in A$. 
\item $\distgtf{G}{T}{F}{x}{y} \ge -1$ for every $x\in A$ and every $y\in B$. 
\item $\distgtf{G}{T}{F}{x}{y} \ge -2$ for every $x, y\in B$.  
\end{rmenum} 
\end{lemma}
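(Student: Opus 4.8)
The plan is to prove all three bounds simultaneously by induction on the number of factor-components of the quasicomb, using the factor-connected case as the base and the contraction at the root factor-component as the inductive reduction. Throughout, recall that for any $x$--$y$ path $P$ the quantity $w_F(P)$ equals $|E(P)\setminus F| - |E(P)\cap F| = |E(P)| - 2|E(P)\cap F|$, and that $\distgtf{G}{T}{F}{x}{y}$ is the minimum of $w_F(P)$ over all $x$--$y$ paths $P$. Since $G$ is bipartite with color classes $A$ and $B$, the length $|E(P)|$ is even when $x,y$ lie in the same class and odd otherwise; hence $w_F(P)$ is even in cases (i) and (iii) and odd in case (ii). This parity observation means it suffices to establish a crude lower bound accurate to within one unit in each case, after which the parity forces the exact value stated. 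Because $F$ is a minimum join, Lemma~\ref{lem:minimumjoin} guarantees there is no circuit of negative $F$-weight, so it is enough to bound a single $F$-shortest path.

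First I would dispose of the base case, where $(G,T;A,B)$ is a factor-connected comb. Here Lemma~\ref{lem:elemcomb2nonposi} gives the exact distances $0$, $-1$, and a value in $\{0,-2\}$ for the three color-pairs, all of which already satisfy (i)--(iii). For the inductive step, let $\rho$ be the root and let $G_\rho\in\tcomp{G}{T}$ be the factor-component containing it. As in the proof of Theorem~\ref{thm:nonposi2poset}, the contraction $(G,T;A,B)/G_\rho$ is again a quasicomb, now with root $[G_\rho]$ and with $F\setminus E(G_\rho)$ as a minimum join, and it has strictly fewer factor-components, so the induction hypothesis applies to it. Given an $F$-shortest $x$--$y$ path $P$, I would split $P$ at its successive visits to $V(G_\rho)$ into maximal subpaths that either lie entirely inside $G_\rho$ or lie in the contracted graft. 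The subpaths inside $G_\rho$ are bounded by Lemma~\ref{lem:elemcomb2nonposi} (applied to $G_\rho$ with the $T$-shift induced by the crossing cut edges), and the subpaths outside are bounded by the induction hypothesis applied in the contraction; summing these contributions and invoking the parity reduction yields the stated bound for $w_F(P)$.

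The main obstacle is the bookkeeping at the interface between $G_\rho$ and the rest of the graft. Each time $P$ enters and leaves $G_\rho$ it uses two edges of the cut $\parcut{G}{G_\rho}$, and I must track the color of each entry and exit vertex (spine versus tooth) to assign each segment the correct bound from Lemma~\ref{lem:elemcomb2nonposi} and to verify that at most one crossing edge per excursion can lie in $F$. The danger is that a clever path accumulates an extra $-2$, either by threading through $G_\rho$ along a weight-$(-2)$ tooth-to-tooth segment or by an unexpected cancellation across the cut. To rule this out I would argue by contradiction: if some $x$--$y$ path violated the claimed bound, concatenating it with a controlled return path through $G_\rho$ (whose weight is bounded by Lemma~\ref{lem:elemcomb2nonposi}) would produce a circuit of negative $F$-weight, contradicting Lemma~\ref{lem:minimumjoin}; Lemma~\ref{lem:circuit} is the tool that handles the zero-weight borderline and certifies which crossing edges are allowed. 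Carrying out this interface analysis case by case on the colors of $x$, $y$, and the crossing vertices completes all three bounds.
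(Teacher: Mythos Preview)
This lemma appears in the appendix as a result cited from Kita~\cite{kita2020bipartite}; the present paper supplies no proof of it, so there is nothing here to compare your sketch against directly.

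Evaluated on its own, your outline has a circularity. You justify the inductive reduction---that contracting the root factor-component $G_\rho$ yields a smaller quasicomb with minimum join $F\setminus E(G_\rho)$---by appeal to the proof of Theorem~\ref{thm:nonposi2poset}. But Claim~\ref{claim:dist} in that proof explicitly invokes Lemma~\ref{lem:qcomb2dist} to rule out negative-weight circuits after contraction, so as written you are assuming the very statement you set out to prove. If quasicombs are \emph{defined} recursively in \cite{kita2020bipartite} (so that the contraction at the root is a quasicomb by construction), say so and drop the reference to Theorem~\ref{thm:nonposi2poset}; otherwise you need an independent argument for this step. Separately, your final paragraph only \emph{describes} the interface bookkeeping rather than carrying it out: you have not actually shown that each excursion of $P$ through $G_\rho$ contributes at least the amount your induction requires, nor controlled how many edges of $\parcut{G}{G_\rho}\cap F$ such an excursion can use. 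These are precisely the places where a spurious extra $-2$ could slip in, and the parity observation alone cannot repair an error of that size.
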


\begin{lemma}[Kita~\cite{kita2020bipartite}] \label{lem:qcomb2path} 
Let $(G, T; A, B)$ be a quasicomb, and let $F$ be a minimum join of $(G, T; A, B)$.  
Let $x, y\in V(G)$, and let $P$ be a path between $x$ and $y$.   
\begin{rmenum} 
\item Let $x, y\in A$. Then, $w_F(P ) = 0$ holds if and only if $P$ is $F$-balanced.  
\item Let $x\in A$ and $y\in B$.  
 $w_F(P ) = -1$ holds if and only if $P$ is $F$-balanced and the edge of $P$ connected to $y$ is in $F$. 
\item Assume that $P$ is $F$-balanced. 
Then, $w_F(P) = 1$ holds if and only if the edge of $P$ connected to $y$ is not in $F$. 
\item Let $x, y\in B$, and assume that $P$ is $F$-balanced. 
Then, $w_F(P) = 0$ holds if and only if, of the two edges of $P$ connected to the ends,  one is in $F$ whereas the other is not in $F$. 
\end{rmenum} 
\end{lemma}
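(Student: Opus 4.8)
The plan is to prove all four equivalences simultaneously by induction on the number of edges of $P$, exploiting that the weight is a pure edge count: $w_F(P) = |E(P)\setminus F| - |E(P)\cap F|$, so that $w_F$ changes by exactly $\pm 1$ whenever a single edge is appended, and its parity is forced by the colors of the two endpoints because $(G,T;A,B)$ is bipartite (a path inside one color class has even length, a path between the two classes has odd length). The base case handles paths contained in a single factor-connected piece, where the exact distance values of Lemma~\ref{lem:elemcomb2nonposi} are available, and throughout the induction the lower bounds of Lemma~\ref{lem:qcomb2dist} anchor which value of $w_F(P)$ is extremal in each of the four color configurations. Here \emph{$F$-balanced} is used in the sense of the preceding work, and the whole argument is a local-to-global counting combined with circuit exchange.

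The inductive engine is to peel off the terminal edge $e$ incident to $y$: writing $P = P' + e$ with $P'$ ending at the penultimate vertex $y'$, we have $w_F(P) = w_F(P') + w_F(e)$, and $y'$ lies in the color class opposite to $y$. This links the four statements into a single chain, since deleting $e$ converts a case-(ii) instance into a case-(i) instance, a case-(iv) instance into a case-(iii) or case-(ii) instance, and so on; meanwhile the $F$-status of $e$, that is, whether the edge connected to $y$ lies in $F$, records exactly the $\pm 1$ contribution $w_F(e)$ and governs how the $F$-balanced property of $P'$ extends to $P$. For the ``if'' directions this yields the claimed weights by direct bookkeeping of how an appended $F$-edge, versus a non-$F$-edge, shifts both the count and the balance condition.

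For the ``only if'' directions I would argue that an extremal weight leaves no room for local imbalance. Concretely, if $P$ attains the extremal value permitted by Lemma~\ref{lem:qcomb2dist} but fails to be $F$-balanced, or carries the wrong terminal edge, then one can exhibit a subpath or a closing circuit whose $F$-weight is negative, contradicting Lemma~\ref{lem:minimumjoin}, or else exactly zero, in which case Lemma~\ref{lem:circuit} permits rerouting across the weight-$0$ circuit to produce a path with strictly better structure, again forcing the stated conclusion. The terminal-edge conditions in (ii), (iii), and (iv) are read off from this exchange, since the rerouting that is available is precisely what pins down whether the edge touching an endpoint must, or must not, lie in $F$.

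I expect the main obstacle to be the $B$--$B$ case (iv), where the extremal weight is not a single value but lies in $\{0,-2\}$, so the two terminal edges must be tracked jointly rather than one at a time; the analysis must separate the subcase in which the two end edges agree in $F$-status, which should force $w_F(P) = -2$ or a contradiction, from the subcase in which they disagree, which should yield $w_F(P) = 0$. A secondary difficulty is that we work in a quasicomb rather than a factor-connected comb, so the clean values of Lemma~\ref{lem:elemcomb2nonposi} are only available inside individual factor-connected segments; bridging these segments requires invoking the quasicomb bounds of Lemma~\ref{lem:qcomb2dist} and checking that the notion of $F$-balanced composes correctly across segment boundaries as the induction peels edges.
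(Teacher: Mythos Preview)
This lemma is not proved in the present paper: it appears in the Appendix under the heading ``The appendix provides some preliminary lemmas inherited from Kita~\cite{kita2020bipartite}'', carries the citation ``[Kita~\cite{kita2020bipartite}]'' in its header, and is stated without proof. Hence there is no ``paper's own proof'' to compare your proposal against; the result is imported wholesale from the prequel, where the notion of \emph{$F$-balanced} is defined and the lemma is established.

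As for your sketch on its own terms: the edge-peeling induction linking the four cases together is a sensible organizing device, and the parity observation (endpoint colors determine the parity of $w_F(P)$) is correct and useful. However, your plan for the ``only if'' directions is underspecified. You invoke a circuit-exchange argument (via Lemmas~\ref{lem:minimumjoin} and \ref{lem:circuit}) to rule out imbalance, but to form a circuit you must close the path $P$ with a second path of known weight, and Lemma~\ref{lem:elemcomb2nonposi} only supplies such paths inside a single factor-component. Your remark that ``bridging these segments requires invoking the quasicomb bounds'' acknowledges the gap but does not fill it: the bounds in Lemma~\ref{lem:qcomb2dist} are one-sided inequalities, not exact values, so they do not by themselves give you a closing path of the weight you need. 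Without the actual definition of $F$-balanced and the structural facts from \cite{kita2020bipartite} that accompany it, the induction cannot be completed as written; you would need to reproduce the relevant machinery from the prequel rather than treat it as a black box.
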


\end{document}